\title{Limit Theorems for Random Sums of Random Summands}
\author{David Grzybowski}
\date{}
\newtheorem{theorem}{Theorem}[section]
\newtheorem{lemma}[theorem]{Lemma}
\newtheorem{corollary}[theorem]{Corollary}
\newtheorem{definition}[theorem]{Definition}
\begin{document}
\maketitle

\begin{abstract}
We prove limit theorems for sums of randomly chosen random variables conditioned on the summands. We consider several versions of the corner growth setting, including specific cases of dependence amongst the summands and summands with heavy tails. We also prove a version of Hoeffding's combinatorial central limit theorem and results for summands taken uniformly from a random sample. These results are proved with concentration of measure techniques.
\end{abstract}

\section{Introduction and Statement of Results}

\quad This paper employs concentration of measure to prove limit theorems for sums of randomly chosen random numbers. This is a particular example of a quenched limit theorem. A quenched limit theorem involves two sources of randomness: a random environment $X$ and a random object conditioned on that environment. In our case, we have a collection of random numbers and an independent selection of a subset of those numbers. The numbers, or weights, are the components of a random $n$-dimensional vector $X$, and the random subset $\sigma$ of size $m$. Our results describe the limiting distribution of $\displaystyle\sum_{a\in\sigma}\frac{X_a-\mathbb{E}X_a}{\sqrt{m}}$, or something similar, where the weights are taken as fixed, so that randomness only comes from $\sigma$.

It is worth distinguishing our situation from the classical problem concerning the distribution of $\displaystyle\sum_{i=1}^NX_i$ where both $N$ and the $\{X_i\}_{i=1}^{\infty}$ are independent random variables. Here we are interested in the case that the summands are already sampled from their respective distributions and we do not specify a fixed order of summation. Thus, we consider the distribution of the sum conditioned on the summands and choose the summands randomly. We also take $N$ to be deterministic and consider the limiting behavior of the distribution as $N$ diverges to $\infty$.

In \textbf{[5]}, the authors address the corner-growth setting: $X$ is indexed by elements of $\Box_{N,M}=\{(i,j):1\le i\le N,1\le j\le M\}$ with $M=\lfloor\xi N\rfloor,\xi>0$. Notionally one traverses this grid from $(1,1)$ to $(N,M)$, and so $\sigma$ is chosen to represent an up-right path, i.e.\ $\sigma=\{(i_k,j_k):k=1,\dots,M+N-1\}$ with $(i_1,j_1)=(1,1)$, $(i_{N+M-1},j_{N+M-1})=(N,M)$, and $(i_{k+1}-i_k,j_{k+1}-j_k)$ is either $(1,0)$ or $(0,1)$. They provide moment conditions for a quenched limit theorem when $\sigma$ is chosen according to three different schemes:

\begin{enumerate}
\item The up-right path $\sigma$ is chosen from those proceeding from $(1,1)$ to $(N,M)$ without additional restriction.

\item As $1.$, but the up-right path $\sigma$ is also specified to pass through points $(\lfloor\zeta_i N\rfloor,\lfloor\xi_iM\rfloor)$ for a finite set of numbers, $0<\zeta_1<\cdots<\zeta_k<1$ and $0<\xi_1<\cdots<\xi_k<\xi$.

\item As $1.$, but $M=N$ and the up-right path $\sigma$ is also specified to avoid a central square with side $(\lfloor\beta N\rfloor), \beta\in(0,1)$.
\end{enumerate}

See the sources cited in \textbf{[5]} for more information on the corner growth setting.

This paper generalizes the concentration result used in \textbf{[5]} and proves limit theorems in both the corner growth setting and in other settings. In particular, that paper is concerned only with independent weights, whereas we cover certain forms of dependency amongst the weights, and we present results for weak convergence in probability as well as almost-sure convergence in distribution. We also extend to cases with heavy-tailed weights. Beyond the corner growth setting, we prove a version of Hoeffding's combinatorial central limit theorem and results related to the empirical distribution of a large random sample.

\begin{definition}Let $\{\mu_n\}_{n=1}^{\infty}$ be a sequence of random probability measures. We say $\{\mu_n\}_{n=1}^{\infty}$ \textbf{converges weakly in probability (WIP)} to $\mu$ if 
\[
\mathbb{P}\bigg[\bigg|\int fd\mu_n-\int fd\mu\bigg|>\epsilon\bigg]\to0
\]
for every $\epsilon>0$ and every test function $f\in\mathcal{C}$ where $\mathcal{C}$ is a class of functions such that if $\int fd\nu_n\to\int fd\nu$ for every $f\in\mathcal{C}$, then $\nu_n\xrightarrow{D}\nu$.
\end{definition}
We typically take $\mathcal{C}$ to be a more restricted class of functions than the bounded, continuous functions used to define convergence in distribution. In particular, we will here take $\mathcal{C}$ to be convex $1$-Lipschitz functions.\footnote{The sufficiency of convergence for convex $1$-Lipschitz functions to establish convergence in distribution can be demonstrated in several ways. One such way is to approximate power functions by convex Lipschitz functions. For $x^{2n}$, we can find an approximating function using Lemma 5 in \textbf{[13]}. For $x^{2n+1}$, we approximate $x^{2n+1}\mathds{1}_{x\ge0}$ and $x^{2n+1}\mathds{1}_{x\le0}$ separately. See \textbf{[5]}, \textbf{[6]}, and \textbf{[12]} for alternative arguments.}

Our first three results concern the corner growth setting. The following is a small extension of the theorems from \textbf{[5]} and involves both weak convergence in probability and $\mathbb{P}_X$-a.s. convergence:

\begin{theorem} Under each of the three methods for sampling $\sigma$ described above, suppose the weights are independent with mean $0$, variance $1$, and $\mathbb{E}|X_a|^p\le K<\infty$. If $p>8$, then
\[
\frac{1}{\sqrt{M+N-1}}\sum_{(i,j)\in\sigma}X_{i,j}\xrightarrow{WIP}\mathcal{N}(0,1),\text{ as }N\to\infty,
\]
and if $p>12$, then
\[
\frac{1}{\sqrt{M+N-1}}\sum_{(i,j)\in\sigma}X_{i,j}\xrightarrow{D}\mathcal{N}(0,1),\text{ as }N\to\infty,
\]
$\mathbb{P}_X$-almost surely.
\end{theorem}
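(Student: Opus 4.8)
\emph{Proof sketch.} Both assertions are instances of the concentration-of-measure scheme on which the rest of the paper rests (the method of \textbf{[4]}), so I only indicate the structure. Put $L=M+N-1$ and $S_\sigma=\sum_{(i,j)\in\sigma}w_{i,j}$, let $\mu_N^w$ be the law of $L^{-1/2}S_\sigma$ under $\mathbb P_\sigma$ (randomness in $\sigma$ only), and let $\phi_w(t)=\mathbb E_\sigma\!\big[e^{itL^{-1/2}S_\sigma}\big]$ be its characteristic function. Write $p_a=\mathbb P_\sigma[a\in\sigma]$ and $p_{ab}=\mathbb P_\sigma[\{a,b\}\subset\sigma]$; note that in the up-right-path setting $p_{ab}=0$ whenever $a\neq b$ lie on a common anti-diagonal, since the path meets each anti-diagonal exactly once. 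By L\'evy's continuity theorem (with a standard subsequence argument), and since $\{\mu_N^w\}$ is tight in probability, resp.\ $\mathbb P_w$-a.s., because $\mathbb E_\sigma[(L^{-1/2}S_\sigma)^2\mid w]=\tfrac1L\sum_a p_aw_a^2+\tfrac1L\sum_{a\neq b}p_{ab}w_aw_b$ has $w$-mean $1$ and small $w$-fluctuation, it suffices to show that for every fixed real $t$ one has $\phi_w(t)\to e^{-t^2/2}$ in probability, resp.\ $\mathbb P_w$-a.s.; if the characterizing class $\mathcal C$ is taken to also require convergence of the first few moments, one argues identically for $\mathbb E_\sigma[(L^{-1/2}S_\sigma)^k\mid w]$ with $k\le4$.

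The annealed limit is immediate: conditioning on $\sigma$ and using independence, $\mathbb E_w\phi_w(t)=\mathbb E_\sigma\big[\prod_{(i,j)\in\sigma}\mathbb E\,e^{itw_{i,j}/\sqrt L}\big]$, and since $|\sigma|=L$, the $w_a$ have mean $0$, variance $1$, and $\sup_a\mathbb E|w_a|^{2+\delta}\le K^{(2+\delta)/p}<\infty$ for $\delta\in(0,1]$, the Lyapunov estimate gives $\prod_{(i,j)\in\sigma}\mathbb E\,e^{itw_{i,j}/\sqrt L}\to e^{-t^2/2}$ uniformly over all admissible paths; hence $\mathbb E_w\phi_w(t)\to e^{-t^2/2}$, and likewise for the conditional moments. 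What must be controlled is the $w$-fluctuation about these $w$-means. Each of $\phi_w(t)$ and the conditional moments is a function of the independent coordinates $(w_a)$, and replacing $w_a$ by an independent copy $w_a'$ alters $S_\sigma$ only by $(w_a'-w_a)\mathds{1}[a\in\sigma]$; hence an Efron--Stein inequality gives $\operatorname{Var}_w\phi_w(t)\le Ct^2L^{-1}\sum_a p_a^2$, and iterating the bounded-difference decomposition in the spirit of Burkholder--Rosenthal yields, for each integer $q\ge1$,
\[
\mathbb E_w\big|\phi_w(t)-\mathbb E_w\phi_w(t)\big|^{2q}\ \le\ C(q,t)\,\mathbb E_w\Big[\Big(L^{-1}\sum_a(|w_a|+|w_a'|)^2p_a^2\Big)^{q}\Big]\ \le\ C'(q,t)\Big(L^{-1}\sum_a p_a^2\Big)^{q},
\]
the last inequality by H\"older, valid once $\mathbb E|w_a|^{2q}\le K$, i.e.\ $p\ge2q$; the conditional $k$-th moment obeys an analogous degree-$k$ polynomial-chaos bound under a moment condition of the same type, and a truncation of $w_a$ at level $L^{1/p-\varepsilon}$ absorbs the borderline losses. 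Everything thus reduces to the combinatorial inputs
\[
\sum_a p_a^2=o(L),\qquad \sum_{a\neq b}p_{ab}^2=o(L^2),\qquad\text{and higher analogues,}
\]
with enough slack that the moment bounds are summable in $N$: for the WIP statement a second-moment bound ($q$ small) and Chebyshev suffice, whereas $\mathbb P_w$-a.s.\ convergence needs $q$ large enough that $\sum_N\mathbb E_w|\phi_w(t)-e^{-t^2/2}|^{2q}<\infty$, after which a union bound over a countable dense set of $t$ (and, if necessary, interpolation from a polynomial subsequence to the full sequence) finishes the argument. It is precisely the way these summability requirements combine with the smoothing inequality needed to pass from $\phi_w$ to the class $\mathcal C$ that pins the thresholds at $p>8$ and $p>12$.

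It remains to verify the combinatorial inputs in each of the three schemes. For the unconstrained up-right path one has $p_{i,j}=\binom{i+j-2}{i-1}\binom{(N-i)+(M-j)}{N-i}\big/\binom{N+M-2}{N-1}$; a local central limit estimate, uniform up to the corners, gives $\sum_{i+j=s}p_{i,j}^2\asymp(\min(s,L-s))^{-1/2}$, so $\sum_a p_a^2\asymp\sqrt L=o(L)$, while a two-point local CLT bounds $\sum_{a\neq b}p_{ab}^2$ by $O(L)=o(L^2)$ (the dominant contribution coming from anti-diagonals within a bounded distance of one another, where $p_{ab}=O(p_a)$, together with a product bound $p_{ab}\le Cp_ap_b$ for well-separated anti-diagonals), and the higher analogues are handled the same way. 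For scheme $2$ the path splits at the prescribed passage points $(\lfloor\xi_iM\rfloor,\lfloor\zeta_iN\rfloor)$ into finitely many \emph{independent} unconstrained sub-paths, each of length proportional to $L$, so the above estimates apply block by block; for scheme $3$ one first conditions on where the path enters and leaves the excluded central square, again reducing to unconstrained pieces, and uses that this conditioning changes the relevant path counts by at most a bounded factor. The genuinely new work, and the main obstacle, is exactly this last step: obtaining local-CLT control of $p_a$ and $p_{ab}$ that is uniform up to the corners, and checking in schemes $2$ and $3$ that the forced passage points and the forbidden block inflate $\sum_a p_a^2$ and $\sum_{a\neq b}p_{ab}^2$ only by constants. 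Granting that, the concentration machinery does the rest.
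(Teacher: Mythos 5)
Your proposal takes a genuinely different route from the paper. The paper proves this theorem by feeding the corner-growth parameters into its general machinery: Lemma 2.2 truncates the weights at level $R$ and applies Talagrand's inequality (Theorem 2.1) to the \emph{convex} $Lm^{-1/2}$-Lipschitz function $F(w)=\mathbb{E}_\sigma f\big(m^{-1/2}\sum_{a\in\sigma}w_a\big)$; Lemma 2.7 then does the exponent bookkeeping with $\alpha=2$, $\eta=2$, $\mu=1$ and the bound $\lambda<\tfrac14$ of Lemma 3.1 (i.e.\ $\sum_a\mathbb{P}_\sigma(a\in\sigma)^2=O(\sqrt N)$, quoted from prior work), which mechanically yields $p>\alpha\eta/(\mu-\alpha\lambda)\to 8$ and $p>\alpha(\eta+1)/(\mu-\alpha\lambda)\to 12$; the annealed term $D=\max_\sigma d_W(\rho_\sigma,\nu)$ is killed by a Lyapunov-type CLT. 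Crucially, the only combinatorial input is the one-point quantity $\sum_a p_a^2$. You instead work with characteristic functions and conditional moments and control their $w$-fluctuations by Efron--Stein and Burkholder--Rosenthal bounds. That substitution is legitimate in outline (your variance bound $\operatorname{Var}_w\phi_w(t)\le Ct^2L^{-1}\sum_ap_a^2$ is correct and needs no convexity or truncation), but it is not a proof of the stated theorem as written.

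Two concrete gaps. First, you never derive the thresholds $p>8$ and $p>12$; you assert that the summability and smoothing requirements ``pin'' them there. In fact your own displayed bounds point elsewhere: the Efron--Stein estimate needs only finite variance, so your WIP argument would go through for any $p\ge2$, and the $2q$-th moment bound with $(L^{-1}\sum_ap_a^2)^q=O(N^{-q/2})$ is summable already for $q=3$, i.e.\ $p\ge6$. Either your method proves a strictly stronger statement (in which case the step producing $8$ and $12$ is fictitious) or there is an uncontrolled loss you have not identified; either way, the part of the argument that is supposed to produce the theorem's hypotheses is missing. Second, your route requires combinatorial inputs the paper never needs: the two-point probabilities $p_{ab}$ and ``higher analogues,'' with $\sum_{a\ne b}p_{ab}^2=o(L^2)$ uniformly in all three schemes, including uniform-up-to-the-corners local CLT control and the reduction of schemes $2$ and $3$ to independent unconstrained blocks. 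You flag this yourself as ``the main obstacle'' and do not prove it; it is not available from the cited literature, whereas the single-point statistic the paper needs is exactly Lemma 3.1. Until those estimates are supplied and the moment accounting is actually carried out, the proposal is an outline of an alternative strategy rather than a proof.
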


The next result concerns dependent weights: the uniform distributions on the sphere and on two simplices.

\begin{theorem}Under each of the three methods for sampling $\sigma$ described above, suppose $w$ has the uniform distribution on...
\begin{enumerate}
\item$\sqrt{NM}S^{NM-1}$,

\item$\Delta_{(MN,=)}=\{x\in\mathbb{R}_+^{MN}:x_1+\cdots+x_{MN}=MN\}$, or

\item$\Delta_{(MN,\le)}=\{x\in\mathbb{R}_+^{MN}:x_1+\cdots+x_{MN}\le MN\}$.
\end{enumerate}
Then,
\[
\frac{1}{\sqrt{M+N-1}}\sum_{(i,j)\in\sigma}\big(X_{i,j}-\mathbb{E}X_{i,j}\big)\xrightarrow{D}\mathcal{N}(0,1),\text{ as }N\to\infty,
\]
$\mathbb{P}_X$-almost surely.
\end{theorem}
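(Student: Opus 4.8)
The plan is to reduce all three cases to Theorem 1.2 using the classical representations of these distributions in terms of independent coordinates, and then to remove the resulting normalization by a pathwise Slutsky argument.

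Write $n=MN$ and $m=M+N-1$, and realize the weights on a common probability space as follows. In case (1), let $g=(g_{i,j})$ be i.i.d.\ $\mathcal{N}(0,1)$ and put $w_{i,j}=\sqrt{n}\,g_{i,j}/\|g\|$. In case (2), let $e=(e_{i,j})$ be i.i.d.\ $\mathrm{Exp}(1)$ and put $w_{i,j}=n\,e_{i,j}/\bigl(\sum_{(k,l)\in\Box_{N,M}}e_{k,l}\bigr)$. In case (3), adjoin one further independent $\mathrm{Exp}(1)$ variable, let $T$ be the sum of all $n+1$ exponentials, and put $w_{i,j}=n\,e_{i,j}/T$. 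A short computation gives $\mathbb{E}w_{i,j}=0$ in case (1) and $\mathbb{E}w_{i,j}=1$ and $n/(n+1)$ in cases (2) and (3). Since $\sigma$ is independent of the environment and, for fixed $\sigma$, the path sum depends only on the weights, the conditional law of the path sum given $w$ coincides, as a random measure, with its conditional law given the generating i.i.d.\ vector; because $w$ is a measurable function of that vector, it therefore suffices to prove the claimed $\mathbb{P}_w$-a.s.\ convergence with the conditioning taken instead on $g$ (resp.\ $e$).

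Next I would factor out the normalization. Let $\omega_{i,j}$ denote the underlying i.i.d.\ weight ($g_{i,j}$ in case (1), $e_{i,j}$ in cases (2) and (3)), so that $\omega_{i,j}-\mathbb{E}\omega_{i,j}$ has mean $0$, variance $1$, and moments of every order. A one-line algebraic rearrangement then gives
\[
\frac{1}{\sqrt{m}}\sum_{(i,j)\in\sigma}\bigl(w_{i,j}-\mathbb{E}w_{i,j}\bigr)\;=\;c_n\bigl(X_n-d_n\bigr),\qquad X_n:=\frac{1}{\sqrt{m}}\sum_{(i,j)\in\sigma}\bigl(\omega_{i,j}-\mathbb{E}\omega_{i,j}\bigr),
\]
where $c_n$ and $d_n$ are measurable with respect to the environment alone, $c_n\to1$ almost surely (it equals $\sqrt{n}/\|g\|$ in case (1) and the reciprocal of an empirical average of the exponentials in cases (2) and (3)), and $d_n\to0$ almost surely. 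In cases (2) and (3) the term $d_n$ is a bounded multiple of $\sqrt{m}\,(\bar e_n-1)$ for the relevant empirical mean $\bar e_n$ of $\Theta(N^2)$ i.i.d.\ exponentials; since $\bar e_n-1=O\!\left(\sqrt{\log\log n/n}\right)$ a.s.\ by the law of the iterated logarithm while $\sqrt{m}=\Theta(\sqrt{N})$ and $n=\Theta(N^2)$, indeed $d_n\to0$ almost surely.

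Finally, Theorem 1.2, applied to the independent weights $\omega_{i,j}-\mathbb{E}\omega_{i,j}$ (which meet its hypotheses, having moments of all orders), gives $X_n\xrightarrow{D}\mathcal{N}(0,1)$ conditionally on the environment, almost surely. On the almost-sure event where this holds together with $c_n\to1$ and $d_n\to0$, the scalar Slutsky lemma, applied pathwise, gives $c_n(X_n-d_n)\xrightarrow{D}\mathcal{N}(0,1)$ conditionally; transporting back from the i.i.d.\ vector to $w$ as above completes the proof. The step needing the most care is the bookkeeping in the reduction — justifying that conditioning on $w$ may be replaced by conditioning on the generating i.i.d.\ coordinates, and that the additive correction $\sqrt{m}\,(\bar e_n-1)$ is genuinely negligible at the $1/\sqrt{m}$ scale; granted these, the statement follows at once from Theorem 1.2.
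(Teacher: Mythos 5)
Your proof is correct, but it takes a genuinely different route from the paper's. The paper does not reduce to the independent case at all: it applies its concentration lemma for dependent environments (Lemma 2.3 / Lemma 2.8), using subgaussian concentration for the sphere and subexponential concentration for the two simplices, to show that the random measure $\mu_w$ concentrates around its $w$-average, and it then controls the annealed term $E=\max_\sigma d_{BL}(\rho_\sigma,\nu)$ via the Diaconis--Freedman total variation bounds comparing the $M+N-1$ relevant coordinates of the sphere (resp.\ simplex) to i.i.d.\ Gaussians (resp.\ exponentials), finishing with the classical CLT. Your argument instead exploits the exact representations $w=\sqrt{n}\,g/\|g\|$ and $w=ne/S$, reduces to Theorem 1.2 applied to the underlying i.i.d.\ coordinates, and strips off the normalization by a pathwise Slutsky step; the key quantitative point --- that $\sqrt{m}\,(\bar e_n-1)\to0$ because $m=\Theta(N)$ while $n=\Theta(N^2)$ --- is correctly identified and is the exact analogue of why the Diaconis--Freedman bound $O((N+M)/(NM))$ tends to zero in the paper's version. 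What the paper's route buys is generality (it works for any environment law with SGC/SEC and needs no explicit i.i.d.\ representation) and it bypasses the heavier truncation machinery behind Theorem 1.2; what your route buys is transparency, making clear that these dependent environments are asymptotically indistinguishable from independent ones, at the cost of leaning on the full strength of Theorem 1.2. One small point to tidy: the appeal to the law of the iterated logarithm tacitly assumes the exponential (or Gaussian) arrays are nested across $N$; for an arbitrary coupling of the environments you should replace it by a tail bound plus Borel--Cantelli, which yields the same conclusion.
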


For the above cases, we can offer a physical interpretation: $\sqrt{NM}S^{NM-1}$ and $\Delta_{(MN,=)}$ are level-sets of specific functions of the weights, meaning that some notion of ``energy" is constant. In $\Delta_{(MN,\le)}$, this ``energy" is merely bounded. Notably, this ``energy" is constrained for the whole system, not merely for the particularly chosen summands.

Concentration is loose for stable vectors, but we can prove the following:

\begin{theorem}In each of the three situations above, suppose the weights are distributed as follows. Let $Y$ be $\alpha$-stable, $\alpha>\frac{3}{2}$, with distribution $\nu_{\alpha}$. Define $\ell_k=\{(i,j):i+j-1=k\}$. Let the components of $X$ all be independent and let all the components indexed by elements of $\ell_k$ have the same distribution as $k^{-\tau}Y$. If $\tau>2$, then
\[
\frac{1}{(2N-1)^{1/\gamma}}\sum_{(i,j)\in\sigma}X_{i,j}\xrightarrow{WIP}\nu_{\infty}
\]
and if $\tau>3$, then
\[
\frac{1}{(2N-1)^{1/\gamma}}\sum_{(i,j)\in\sigma}X_{i,j}\xrightarrow{D}\nu_{\infty}
\]
$\mathbb{P}_X$-almost surely, where $\nu_{\infty}$ has characteristic exponent
\[
\psi(t)=-\kappa^{\alpha}\displaystyle\lim_{n\to\infty}\displaystyle\sum_{k=1}^{n}\frac{k^{-\alpha\tau}}{n^{\alpha/\gamma}}|t|^{\alpha}\bigg(1-\imath\beta\operatorname{sign}(t)\tan\frac{\pi\alpha}{2}\bigg),
\]
$\kappa$ and $\beta$ are constants determined by $\nu_{\alpha}$, and $\gamma=\frac{\alpha}{1-\alpha\tau}$.
\end{theorem}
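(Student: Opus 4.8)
The plan is to pass to conditional characteristic functions, compute the annealed limit directly, and then obtain the quenched statements by concentration of measure — specifically through the Efron--Stein inequality, since a bounded-differences (McDiarmid) bound turns out to be too weak in the stable regime. First I would reduce to one dimension: writing $m=M+N-1$ for the number of anti-diagonals (so $m=2N-1$ when $M=N$), every admissible up-right path $\sigma$ meets each $\ell_k$, $1\le k\le m$, in exactly one point $\pi_k(\sigma)$, so $\sum_{(i,j)\in\sigma}w_{i,j}=\sum_{k=1}^m w_{\pi_k(\sigma)}$ and the conditional law of the normalised sum is carried by the quenched characteristic function $\phi_w(t)=\mathbb{E}_\sigma\!\left[\exp\!\left(\imath t\,m^{-1/\gamma}\sum_{(i,j)\in\sigma}w_{i,j}\right)\right]$, where $\mathbb{E}_\sigma$ is the average over $\sigma$ with $w$ fixed. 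Since the complex exponentials $\{x\mapsto e^{\imath tx}\}_{t\in\mathbb{R}}$ form a convergence-determining class, it then suffices to show, for each fixed $t$, that $\phi_w(t)\to e^{\psi(t)}$ in $\mathbb{P}_w$-probability (for the WIP conclusion) and $\mathbb{P}_w$-almost surely (for the a.s.\ conclusion), and to reassemble these afterwards over a countable dense set of $t$ by L\'evy's continuity theorem.

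For the annealed limit I would fix a path $\sigma_0$: the summands along it are independent with $w_{\pi_k(\sigma_0)}\sim k^{-\tau}X$, so, writing $\psi_X(t)=-\kappa^\alpha|t|^\alpha\bigl(1-\imath\beta\operatorname{sign}(t)\tan\tfrac{\pi\alpha}{2}\bigr)$ for the characteristic exponent of $\nu_\alpha$ and using its homogeneity $\psi_X(cs)=c^\alpha\psi_X(s)$ for $c>0$,
\[
\mathbb{E}_w\!\left[\exp\!\left(\imath t\,m^{-1/\gamma}\sum_{(i,j)\in\sigma_0}w_{i,j}\right)\right]=\exp\!\left(\psi_X(t)\,m^{-\alpha/\gamma}\sum_{k=1}^m k^{-\alpha\tau}\right).
\]
This does not depend on $\sigma_0$, so averaging over $\sigma$ as well yields $\mathbb{E}_w\phi_w(t)=\exp\!\bigl(\psi_X(t)m^{-\alpha/\gamma}\sum_{k=1}^m k^{-\alpha\tau}\bigr)$, which converges to $e^{\psi(t)}$ precisely by the choice $\gamma=\alpha/(1-\alpha\tau)$ — and this also identifies $\psi$. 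It then remains to prove $\phi_w(t)-\mathbb{E}_w\phi_w(t)\to0$. The key observation is that $\phi_w$ is a well-behaved function of the independent (albeit heavy-tailed) coordinates $(w_a)$: for $b\in\ell_j$, splitting the path sum according to whether $\sigma$ passes through $b$ gives
\[
\phi_w(t)=A_b+e^{\imath t w_b/m^{1/\gamma}}C_b,\qquad |C_b|\le\mathbb{P}_\sigma(b\in\sigma),
\]
with $A_b,C_b$ depending on $(w_a)_{a\ne b}$ only, so that $\phi_w$ changes by at most $2|C_b|$ when $w_b$ is replaced by an independent copy $w_b'$ — and, since $|e^{\imath\theta}|=1$, this bounded-difference coefficient does \emph{not} blow up with the heavy-tailed value of $w_b$.

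Then Efron--Stein, the independence of $C_b$ from $(w_b,w_b')$, the bound $|C_b|^2\le\mathbb{P}_\sigma(b\in\sigma)^2$, and $1-|\widehat{w_b}(t m^{-1/\gamma})|^2\le 2\kappa^\alpha|t|^\alpha j^{-\alpha\tau}m^{-\alpha/\gamma}$ (with $\widehat{w_b}$ the characteristic function of $w_b$) give
\[
\operatorname{Var}_w(\phi_w(t))\le\sum_b\mathbb{E}_w|C_b|^2\left(1-|\widehat{w_b}(t m^{-1/\gamma})|^2\right)\le 2\kappa^\alpha|t|^\alpha m^{-\alpha/\gamma}\sum_{j=1}^m j^{-\alpha\tau}\sum_{b\in\ell_j}\mathbb{P}_\sigma(b\in\sigma)^2 .
\]
The decisive combinatorial input is that each $\sigma$ meets $\ell_j$ exactly once, so $\sum_{b\in\ell_j}\mathbb{P}_\sigma(b\in\sigma)=1$, whence $\sum_{b\in\ell_j}\mathbb{P}_\sigma(b\in\sigma)^2\le q_j:=\max_{b\in\ell_j}\mathbb{P}_\sigma(b\in\sigma)$; the hitting-probability estimates of \textbf{[4]} give $q_j\le C\,(\min(j,m-j))^{-1/2}$ in each of the three path schemes (with a scheme-dependent constant and finitely many harmless exceptions at the mandatory anti-diagonals of scheme $2$). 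Hence $\operatorname{Var}_w(\phi_w(t))\le C|t|^\alpha m^{-\alpha/\gamma}\sum_{j=1}^m j^{-\alpha\tau}q_j\to 0$ — the convergence to zero being exactly what the exponent $\gamma$ (together with the hypotheses on $\alpha,\tau$) is tailored to produce — and Chebyshev delivers the WIP statement. Note that $\nu_\infty$ is the same under all three schemes, since only the one-point-per-anti-diagonal property was used, the scheme entering solely through the bound on $q_j$.

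The almost-sure statement is the step I expect to be the main obstacle. Borel--Cantelli requires the deviations $\mathbb{P}_w(|\phi_w(t)-\mathbb{E}_w\phi_w(t)|>\delta)$ to be summable in $N$, but $\operatorname{Var}_w(\phi_w(t))$ decays only like $m^{-1/2}$, so Chebyshev is not enough and one must bound the higher moments $\mathbb{E}_w|\phi_w(t)-\mathbb{E}_w\phi_w(t)|^{2p}$ through the $L^p$ form of Efron--Stein, i.e.\ control $\bigl\|\sum_b\mathbb{E}_b|\phi_w-\phi_w^{(b)}|^2\bigr\|_{L^p(\mathbb{P}_w)}$, where $\mathbb{E}_b$ averages only the resampled coordinate and $\phi_w^{(b)}$ is the corresponding resampled functional. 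Here the heavy tails genuinely intervene: the bounded-difference coefficient $2\mathbb{P}_\sigma(b\in\sigma)$, although bounded, has square-sum of order $\sqrt m$ (which is why McDiarmid is useless), and the Efron--Stein variance proxy
\[
\mathbb{E}_b|\phi_w-\phi_w^{(b)}|^2=|C_b|^2\,\mathbb{E}_{w_b'}\bigl|e^{\imath t w_b/m^{1/\gamma}}-e^{\imath t w_b'/m^{1/\gamma}}\bigr|^2\le|C_b|^2\min\!\bigl(4,\;C|t|^\alpha j^{-\alpha\tau}m^{-\alpha/\gamma}+Ct^2 m^{-2/\gamma}w_b^2\bigr)
\]
still contains the un-averaged heavy-tailed $w_b$ and has no usable deterministic bound. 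Estimating its $L^p$-norm calls for a further concentration step, and it is here that the hypotheses $\alpha>\tfrac{3}{2}$ (so that $\mathbb{E}|X|^p<\infty$ for some $p<\alpha$ large enough) and $\tau>3$ (supplying the extra summability) are consumed, forcing this random sum to concentrate and decay polynomially in $N$; this quantitative estimate is precisely the generalised concentration result of the paper, applied in the present setting. With it in hand, Borel--Cantelli gives almost-sure convergence of $\phi_w(t)$ for each $t$ in a countable dense set, and L\'evy's continuity theorem completes the proof.
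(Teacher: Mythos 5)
Your route is genuinely different from the paper's. The paper never touches characteristic functions or Efron--Stein: it treats $F(w)=\int f\,d\mu_w=\mathbb{E}_\sigma f\bigl((2N-1)^{-1/\gamma}\sum_{a\in\sigma}w_a\bigr)$ as a Lipschitz function of the stable vector $w$ (with Lipschitz constant $L(2N-1)^{-1/\gamma}$, where $L^2=\sum_a\mathbb{P}_\sigma(a\in\sigma)^2$) and applies Houdr\'e's polynomial deviation inequality for $\alpha$-stable vectors with $\alpha>\frac{3}{2}$ (Theorem 2.5, via Lemmas 2.6 and 2.9); the hypotheses on $\tau$ enter through the bound $\lambda(S^{2N-2})\le 2N^{2-\tau}$ on the spherical part of the L\'evy measure, and the path combinatorics enter only through Lemma 3.1's bound $L=O(N^{1/4})$. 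The identification of the limit is the same in both arguments: each path meets each anti-diagonal exactly once, so $\rho_\sigma$ is path-independent and $D=d_W(\nu_N,\nu_\infty)\to0$. Your Efron--Stein bound on $\operatorname{Var}_w(\phi_w(t))$ is a correct inequality and is arguably more elementary for the WIP half, needing neither convexity nor the stable deviation inequality. Do note, however, that $q_1=\mathbb{P}_\sigma((1,1)\in\sigma)=1$, so $\sum_j j^{-\alpha\tau}q_j$ is bounded below by $1$ and all of the decay must come from the prefactor $m^{-\alpha/\gamma}$; whether that prefactor tends to $0$ is a bookkeeping point about $\gamma$ that you assert rather than check (and which is delicate in the theorem as stated, since $1/\gamma=(1-\alpha\tau)/\alpha<0$ under the stated hypotheses).

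The genuine gap is the almost-sure half. You correctly diagnose that Chebyshev on the Efron--Stein variance is not summable in $N$ and that McDiarmid fails, but your proposed fix --- an $L^p$ Efron--Stein estimate whose key step you defer to ``the generalised concentration result of the paper, applied in the present setting'' --- is precisely the statement that has to be proved, and nothing in your argument supplies it; as written, the $\mathbb{P}_w$-a.s.\ conclusion is not established. The paper closes this by a different mechanism: Theorem 2.5 already yields a deviation probability that is polynomially small in $N$ once one inserts $\lambda(S^{2N-2})=O(N^{2-\tau})$, $m=2N-1$, and $L=O(N^{1/4})$, and the hypotheses $\tau>2$ and $\tau>3$ are exactly what make this tail vanish, respectively be summable so that Borel--Cantelli applies. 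If you want to keep your characteristic-function framework, the cleanest repair is not higher Efron--Stein moments but an application of that same stable-vector deviation inequality to the bounded Lipschitz maps $w\mapsto\operatorname{Re}\phi_w(t)$ and $w\mapsto\operatorname{Im}\phi_w(t)$, whose Lipschitz constants are at most $|t|L(2N-1)^{-1/\gamma}$ by the same splitting you already use.
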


Our version of Hoeffding's combinatorial central limit theorem is as follows.

\begin{theorem}Let $X_N$ be an $N\times N$ array of independent random variables $X_{i,j},1\le i,j\le N$ with mean $0$, variance $1$, and $\mathbb{E}|X_{i,j}|^p<K$ for all $i,j$. Let $\pi$ be a permutation of $(1,\dots,N)$ chosen uniformly and $S_N=\frac{1}{\sqrt{N}}\sum_{i=1}^NX_{i,\pi(i)}$. If $p>4$, then
\[
S_N\xrightarrow{WIP}\mathcal{N}(0,1),\text{ as }N\to\infty,
\]
and if $p>6$, then
\[
S_N\xrightarrow{D}\mathcal{N}(0,1),\text{ as }N\to\infty,
\]
$\mathbb{P}_X$-almost surely.
\end{theorem}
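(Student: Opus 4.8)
The plan is to reduce the quenched statement to a classical (annealed) central limit theorem by a concentration estimate over the array $w_N$. Conditionally on $w_N$, the law $\mu_N$ of $W_N$ is determined by the characteristic function $\phi_N(t):=\mathbb{E}_\pi\big[e^{\imath tW_N}\big]$, a random function of $w_N$; since the characteristic function determines the (Gaussian) limit and the maps $x\mapsto e^{\imath tx}$ lie in a convergence-determining class, it suffices to show, for each fixed $t$, that $\phi_N(t)\to e^{-t^2/2}$ in probability (which gives WIP) and $\mathbb{P}_w$-almost surely (which, together with an equicontinuity estimate, gives almost sure convergence in distribution via Lévy's theorem). I would split $\phi_N(t)-e^{-t^2/2}=\big(\phi_N(t)-\mathbb{E}_w\phi_N(t)\big)+\big(\mathbb{E}_w\phi_N(t)-e^{-t^2/2}\big)$ and treat the two summands separately.

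The second summand is the annealed statement: conditionally on $\pi$, the variables $w_{1,\pi(1)},\dots,w_{N,\pi(N)}$ are independent with mean $0$, variance $1$, and $p$-th moments bounded by $K$, so $\sum_i w_{i,\pi(i)}/\sqrt{N}$ is exactly the Lyapunov-normalized sum, and Lyapunov's theorem (with an error bound uniform in $\pi$) gives $\mathbb{E}_w\big[e^{\imath tW_N}\mid\pi\big]\to e^{-t^2/2}$; averaging over $\pi$ yields $\mathbb{E}_w\phi_N(t)\to e^{-t^2/2}$.

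For the first summand I would use concentration over the independent entries $w_{i,j}$, exploiting that $w_{i,j}$ influences $W_N$ only on the event $\{\pi(i)=j\}$, which has probability $1/N$. Fix $\theta\in(0,\tfrac12)$ and truncate: replace $w_{i,j}$ by $\tilde w_{i,j}=w_{i,j}\mathds{1}_{\{|w_{i,j}|\le N^\theta\}}-\mathbb{E}\big[w_{i,j}\mathds{1}_{\{|w_{i,j}|\le N^\theta\}}\big]$, with $\tilde W_N,\tilde\phi_N$ the corresponding quantities. The discrepancy $\mathbb{E}_\pi|W_N-\tilde W_N|$ is controlled by the tail bounds $\mathbb{E}\big[|w|\mathds{1}_{\{|w|>N^\theta\}}\big]\le KN^{-\theta(p-1)}$ and $N^2\,\mathbb{P}[|w|>N^\theta]\le KN^{2-\theta p}$, which make it small in $\mathbb{P}_w$-probability and, when $p$ is large enough, summably small along $N$, hence negligible $\mathbb{P}_w$-a.s. by Borel--Cantelli; the same bounds control $|\mathbb{E}_w\phi_N(t)-\mathbb{E}_w\tilde\phi_N(t)|$. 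Now $\tilde w_{i,j}$ is bounded by $2N^\theta$ and still affects $\tilde W_N$ only when $\pi(i)=j$, so $w_N\mapsto\tilde\phi_N(t)$ has bounded differences with per-entry increment of order $|t|\,N^{\theta-3/2}$, and McDiarmid's inequality gives $\mathbb{P}_w\big[\,|\tilde\phi_N(t)-\mathbb{E}_w\tilde\phi_N(t)|>\epsilon\,\big]\le 2\exp\big(-c_t\,\epsilon^2 N^{1-2\theta}\big)$, which tends to $0$ and is summable in $N$. The equicontinuity needed in the almost sure case follows once $\mathbb{E}_\pi[W_N^2]\to1$ $\mathbb{P}_w$-a.s., which, via the classical variance formula for a permutation statistic, reduces to $\tfrac1{N^2}\sum_{i,j}w_{i,j}^2\to1$ together with negligibility of the row/column-sum corrections—again a concentration statement for sums of the independent entries.

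The main obstacle I anticipate is calibrating the truncation level $\theta$ against the moment exponent $p$. Rareness of $N^\theta$-exceedances pushes $\theta$ up—for the almost sure statement one needs $\sum_N N^{2-\theta p}<\infty$, i.e.\ $\theta>3/p$—while McDiarmid and the $N^{\theta-3/2}$ increments push $\theta$ below $\tfrac12$; these windows are nonempty precisely when $p>6$, and the weaker, convergence-in-probability version of the same bookkeeping (together with the second-moment control of $\tfrac1{N^2}\sum w_{i,j}^2$, which needs a finite fourth moment) produces the threshold $p>4$. Getting the constants in this trade-off right, and verifying that each auxiliary array statistic converges at the required rate, is essentially all of the remaining work.
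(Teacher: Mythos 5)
Your proposal is correct in outline and reaches the right thresholds, but it takes a genuinely different route from the paper. The paper proves Theorem 1.5 as an instance of its general machinery: Lemma 2.2 truncates the weights at level $R=N^{\rho}$ and applies Talagrand's inequality (Theorem 2.1) to the convex, $\frac{L}{\sqrt m}$-Lipschitz function $F(w)=\mathbb{E}_{\pi}f\bigl(N^{-1/2}\sum_i w_{i,\pi(i)}\bigr)$ for convex $1$-Lipschitz test functions $f$, controls the annealed term by $\max_{\pi}d_W(\rho_{\pi},\nu)$ via a Berry--Esseen-type bound, and then Lemma 3.4 computes $L=1$ (your ``each entry matters only with probability $1/N$'') so that Lemma 2.7 with $\alpha=\eta=2$, $\mu=1$, $\lambda=0$ yields $p>4$ and $p>6$. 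You replace the test-function class by complex exponentials and the concentration tool by McDiarmid's bounded-differences inequality applied to $w\mapsto\tilde\phi_N(t)$; the exponents agree because your per-entry increment $|t|N^{\theta-3/2}$ produces the same concentration rate $\exp(-c\,\epsilon^2N^{1-2\theta})$ that Talagrand gives with $R=N^{\rho}$, and your truncation bookkeeping ($\theta<\tfrac12$ versus $\theta>2/p$ or $3/p$) lands on the same windows as the paper's $\rho<\tfrac12$ versus $\rho(p-2)>1$ or $2$. What your route buys is elementarity and freedom from convexity of the test function; what it costs is the extra tightness/equicontinuity step (via $\mathbb{E}_{\pi}[W_N^2]$) needed to upgrade almost-sure convergence of $\phi_N(t)$ on a countable dense set of $t$ to convergence in distribution, a step the paper avoids by working with Lipschitz test functions and the Wasserstein distance directly; note also that for the WIP statement this tightness step is not actually needed, since the paper's definition of WIP only requires convergence in probability testwise. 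The paper's approach has the further advantage that the same lemma simultaneously handles all the corner-growth selection schemes, whereas your bounded-differences estimate is tailored to the permutation structure.
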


Hoeffding proved his combinatorial central limit theorem in 1951 \textbf{[8]} for deterministic weights, and it has been refined and proved in many ways since. Random weights were apparently first considered in \textbf{[7]}. In \textbf{[2]} a version is proved with random weights using concentration of measure and Stein's method.

When we select finitely many numbers, we have the following result:

\begin{theorem}Let $X=\{X_n\}_{n=1}^{N}$ be a sample of i.i.d.\ random variables with common distribution $\mu$ such that $\mathbb{E}|X_1|^p<\infty$ for some $p>2$. If $\sigma\subset\{1,\dots,N\}$ is chosen uniformly from subsets of size $m$, with $m$ fixed, then
\[
\sum_{j\in\sigma}X_j\xrightarrow{WIP}\mu^{*m},\text{ as }N\to\infty.
\]
If $\mathbb{E}|X_1|^p<\infty$ for some $p>4$, then
\[
\sum_{j\in\sigma}X_j\xrightarrow{D}\mu^{*m},\text{ as }N\to\infty
\]
$\mathbb{P}_X$-almost surely.
\end{theorem}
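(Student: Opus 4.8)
The plan is to exploit the fact that here — in contrast to the central-limit regime of Theorems~1.2--1.5 — the number of summands $m$ is \emph{fixed}, so the conditional law of $\sum_{j\in\sigma}X_j$ given the sample is, up to a negligible correction, the $m$-fold convolution of the sample's empirical measure, and the whole problem collapses to a law-of-large-numbers statement together with a concentration estimate in the style used throughout the paper. Write $w=(X_1,\dots,X_N)$ for the environment and let $\nu_N=\nu_N(w)$ be the conditional law of $\sum_{j\in\sigma}X_j$ given $w$, i.e.\ $\nu_N=\binom{N}{m}^{-1}\sum_{|A|=m}\delta_{\sum_{j\in A}X_j}$. First I would record the deterministic estimate $\|\nu_N-\widehat\mu_N^{\,*m}\|_{\mathrm{TV}}\le m(m-1)/N$, where $\widehat\mu_N=\frac1N\sum_{i=1}^N\delta_{X_i}$: the only discrepancy between $\nu_N$ and the convolution power $\widehat\mu_N^{\,*m}$ comes from $m$-tuples of indices containing a repetition, and these form an $O(1/N)$ fraction of all $m$-tuples. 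So it suffices to prove both convergences with $\widehat\mu_N^{\,*m}$ in place of $\nu_N$. The key structural point is that for every test function $f$, $\mathbb{E}_w\!\left[\int f\,d\nu_N\right]=\mathbb{E}\!\left[f(X_{J_1}+\cdots+X_{J_m})\right]=\int f\,d\mu^{*m}$ \emph{exactly} and for every $N$, because the $m$ entries of a uniformly chosen size-$m$ subset of an i.i.d.\ sample are themselves i.i.d.\ with law $\mu$. Thus there is no bias term to control — unlike the conditional-variance computations in the earlier theorems — and everything reduces to showing that the random number $\Phi_N(w):=\int f\,d\nu_N$, which is a $U$-statistic of degree $m$ with kernel $h(x_1,\dots,x_m)=f(x_1+\cdots+x_m)$, concentrates about its mean.

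For the WIP statement I would fix $f\in\mathcal C$ and bound the second moment of $\Phi_N$ about its mean. Since changing one coordinate $X_i$ affects only the $\binom{N-1}{m-1}$ terms whose index set contains $i$ — a fraction $m/N$ of all terms — the Efron--Stein inequality (equivalently Hoeffding's variance formula for $U$-statistics) gives $\operatorname{Var}_w(\Phi_N)\le C(f,m)/N$, with $C(f,m)<\infty$ under the hypothesis $\mathbb{E}|X_1|^p<\infty$, $p>1$, which is what makes the relevant moments of $f$ under $\mu^{*m}$ and of its conditional projections finite; alternatively one may simply invoke the weak law of large numbers for $U$-statistics, valid whenever $\mathbb{E}|h|<\infty$. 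Chebyshev's inequality then yields $\mathbb{P}_w\big[|\Phi_N-\int f\,d\mu^{*m}|>\epsilon\big]\le C(f,m)/(N\epsilon^2)\to0$.

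For the almost-sure statement I would need the tail probabilities summable in $N$, so that Borel--Cantelli applies. If $\mathcal C$ may be taken to consist of bounded functions, $\Phi_N$ has bounded differences with constants $O(1/N)$ and McDiarmid's inequality gives $\mathbb{P}_w\big[|\Phi_N-\int f\,d\mu^{*m}|>\epsilon\big]\le2e^{-cN\epsilon^2}$, which is summable; in general one instead estimates a higher even moment, $\mathbb{E}_w\big|\Phi_N-\int f\,d\mu^{*m}\big|^{2k}=O(N^{-k})$, via the Hoeffding decomposition (the degree-one part is a centered i.i.d.\ sum and dominates) together with Rosenthal's inequality — summable once $k\ge2$, and valid for the possibly unbounded test functions only under the stronger assumption $p>2$, which is exactly why $p>2$ rather than $p>1$ is imposed here. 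Applying Borel--Cantelli to each $f$ in a countable determining subfamily of $\mathcal C$, intersecting the countably many full-measure events, and using the a.s.\ tightness of $(\nu_N)$ — immediate from Markov's inequality and the law of large numbers for $\frac1N\sum_i|X_i|^p$ — one concludes $\nu_N\xrightarrow{D}\mu^{*m}$ $\mathbb{P}_w$-almost surely.

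I expect the main obstacle to be the almost-sure statement for unbounded test functions: without boundedness there is no exponential concentration to lean on, so one must manufacture summable polynomial-moment bounds for the $U$-statistic, and the bookkeeping in the Hoeffding decomposition — isolating the linear ($c=1$) part, a centered i.i.d.\ sum which controls the rate, and dominating the higher-degree degenerate parts — is the technical heart of the argument. It is precisely the gap between ``the kernel has finite variance'' (enough for WIP) and ``the kernel has enough higher moments for a summable bound'' (needed $\mathbb{P}_w$-a.s.) that accounts for the gap between the hypotheses $p>1$ and $p>2$. A secondary, routine point is the standard passage from almost-sure convergence of $\int f\,d\nu_N$ for each $f$ in a countable determining family to the full conclusion $\nu_N\xrightarrow{D}\mu^{*m}$.
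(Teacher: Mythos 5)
Your proposal is correct in outline but follows a genuinely different route from the paper. The paper disposes of this theorem in one line by feeding it into the general machinery of Section 2: it observes that for every fixed $\sigma$ of size $m$ the law $\rho_\sigma$ of $\sum_{j\in\sigma}X_j$ is exactly $\mu^{*m}$, so the bias term $D=\max_\sigma d_W(\rho_\sigma,\nu)$ vanishes by construction, and then invokes the convergence lemmas with $\alpha=\infty$, $\eta=1$, $\mu=0$, and $\lambda=\mu-\tfrac{\eta}{2}=-\tfrac12$ from Lemma 3.2; the concentration itself comes from viewing $w\mapsto\mathbb{E}_\sigma f\big(\sum_{j\in\sigma}w_j\big)$ as an $L$-Lipschitz convex function of the (truncated) weight vector and applying Talagrand's inequality, with the moment hypothesis on $X_1$ paying for the truncation error. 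You identify the same two structural facts --- exact unbiasedness (your observation that $\mathbb{E}_w\int f\,d\nu_N=\int f\,d\mu^{*m}$ is precisely the paper's $D=0$) and the $O(m/\sqrt N)$ sensitivity of $\int f\,d\nu_N$ to the sample --- but you exploit them through $U$-statistic theory (Efron--Stein or the law of large numbers for $U$-statistics for the WIP claim; the Hoeffding decomposition with Rosenthal or McDiarmid for the almost-sure claim) rather than through Talagrand-type concentration. Your route is more elementary, does not require convexity of the test functions, and for bounded Lipschitz $f$ would in fact prove the statement with no moment hypothesis at all; the opening total-variation comparison with $\widehat\mu_N^{\,*m}$ is harmless but unnecessary, since you subsequently work with $\Phi_N$ directly. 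One quantitative point should be tightened: for unbounded Lipschitz test functions the claimed bound $\mathbb{E}\big|\Phi_N-\int f\,d\mu^{*m}\big|^{2k}=O(N^{-k})$ with $k\ge2$ needs the fourth moment of the projected kernel $h_1(X_1)$, hence $p\ge4$ rather than $p>2$; under $p>2$ you should instead apply Rosenthal (or a Marcinkiewicz--Zygmund bound) at order $q\in(2,p]$ to the linear term of the Hoeffding decomposition, obtaining a tail of order $N^{-q/2}$, summable precisely because $q>2$, and control the degenerate terms by their second moments, which are $O(N^{-2})$. With that adjustment your argument closes and yields the same conclusions as the paper's.
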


When $m=1$, this is a form of the Glivenko-Cantelli theorem. In addition, we have the following result for the uniform distributions on the sphere and two simplices.

\begin{theorem}Let $X$ be uniformly distributed on $K\subset\mathbb{R}^N$ and $\sigma$ be chosen uniformly from subsets of $\{1,\dots,N\}$ of size $m$, which is fixed. If $K=\dots$
\begin{enumerate}
\item$\sqrt{N}S^{N-1}$, then
\[
m^{-1/2}\sum_{j\in\sigma}X_j\xrightarrow{D}\mathcal{N}(0,1)\text{ as }N\to\infty,
\]

\item$\Delta_{N,=}$ or $\Delta_{N,\le}$, then
\[
\sum_{j\in\sigma}X_j\xrightarrow{D}\Gamma_{m,1}\text{ as }N\to\infty,
\]
\end{enumerate}
$\mathbb{P}_X$-almost surely, where $\Gamma_{m,1}$ is the Gamma distribution with shape paramter $m$ and scale parameter $1$.
\end{theorem}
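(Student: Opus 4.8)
The plan is to condition on the weights $w$, so that all randomness resides in $\sigma$, and to show that the conditional law $\nu_N^w$ of $c_NS_\sigma$ — where $S_\sigma=\sum_{j\in\sigma}w_j$, and $c_N=m^{-1/2}$ when $K=\sqrt{N}S^{N-1}$ while $c_N=1$ when $K$ is one of the two simplices — converges weakly to $\mu_\infty$ for $\mathbb{P}_w$-almost every $w$, where $\mu_\infty=\mathcal{N}(0,1)$ in the first case and $\mu_\infty=\Gamma_{m,1}$ in the second. Fix a countable convergence-determining class $\mathcal{C}$ of bounded Lipschitz functions for weak convergence on $\mathbb{R}$, and for $F\in\mathcal{C}$ set
\[
\Phi_N(w)=\int F\,d\nu_N^w=\binom{N}{m}^{-1}\sum_{|\sigma|=m}F\big(c_NS_\sigma\big).
\]
It suffices to prove, for each fixed $F\in\mathcal{C}$, that $\Phi_N(w)\to\int F\,d\mu_\infty$ for $\mathbb{P}_w$-almost every $w$; intersecting the countably many almost-sure events over $F\in\mathcal{C}$ then gives $\nu_N^w\xrightarrow{D}\mu_\infty$ almost surely. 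For this I will establish (i) a concentration inequality showing $\Phi_N$ is close to its mean $\mathbb{E}_w\Phi_N$ on $K$, with a tail bound summable in $N$, and (ii) that $\mathbb{E}_w\Phi_N(w)\to\int F\,d\mu_\infty$.

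For (ii), the coordinates of $w$ are exchangeable under the uniform measure on $K$, so $\mathbb{E}_w\Phi_N(w)=\mathbb{E}_w\big[F\big(c_N(w_1+\cdots+w_m)\big)\big]$ and it is enough to find the weak limit of $(w_1,\dots,w_m)$. When $K=\sqrt{N}S^{N-1}$, the finite-dimensional marginals of the uniform measure converge weakly to those of an i.i.d.\ $\mathcal{N}(0,1)$ sequence (the Poincar\'e--Borel lemma), so $m^{-1/2}(w_1+\cdots+w_m)\xrightarrow{D}\mathcal{N}(0,1)$ and, $F$ being bounded and continuous, $\mathbb{E}_w\Phi_N\to\int F\,d\mathcal{N}(0,1)$. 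When $K=\Delta_{N,=}$ a uniform point has $w_i=Ny_i$ with $(y_1,\dots,y_N)$ Dirichlet$(1,\dots,1)$, and when $K=\Delta_{N,\le}$ the marginal density of $w_i$ is proportional to $(N-w_i)^{N-1}$ on $[0,N]$; in both cases $\mathbb{P}[w_1>t]\to e^{-t}$ and, more generally, $(w_1,\dots,w_m)$ converges weakly to a vector of $m$ independent $\mathrm{Exp}(1)=\Gamma_{1,1}$ random variables. Hence $w_1+\cdots+w_m\xrightarrow{D}\Gamma_{m,1}$ and $\mathbb{E}_w\Phi_N\to\int F\,d\Gamma_{m,1}$.

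For (i), if $F$ is $L$-Lipschitz then the Cauchy--Schwarz inequality, concavity of the square root, and the identity $\binom{N}{m}^{-1}\sum_{|\sigma|=m}\sum_{j\in\sigma}(w_j-w'_j)^2=\frac{m}{N}\|w-w'\|_2^2$ give the dimension-gaining bound $|\Phi_N(w)-\Phi_N(w')|\le\frac{Lm}{\sqrt{N}}\|w-w'\|_2$. On $\sqrt{N}S^{N-1}$ this makes $u\mapsto\Phi_N(\sqrt{N}u)$ an $Lm$-Lipschitz function on the unit sphere, so L\'evy's lemma yields $\mathbb{P}_w\big[|\Phi_N-\mathbb{E}_w\Phi_N|>\epsilon\big]\le2\exp\big(-(N-1)\epsilon^2/(2L^2m^2)\big)$, which is summable, and Borel--Cantelli gives $\Phi_N(w)-\mathbb{E}_w\Phi_N(w)\to0$ almost surely. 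For the two simplices I will combine the same Lipschitz bound with the concentration machinery used in the proof of Theorem 1.3 — equivalently, represent the uniform law on $\Delta_{N,=}$ as $w_i=NE_i/\sum_{j=1}^{N}E_j$ (and analogously for $\Delta_{N,\le}$, with $N+1$ exponentials) for $E_j$ i.i.d.\ standard exponential, and exploit the exponential concentration of $\sum_{j=1}^{N}E_j$ about $N$ — again producing a tail bound summable in $N$ and hence $\Phi_N(w)-\mathbb{E}_w\Phi_N(w)\to0$ almost surely. Combining (i) and (ii) and intersecting over $F\in\mathcal{C}$ completes the proof.

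The main obstacle is step (i) for the two simplices. The sphere is dispatched at once by L\'evy's lemma, but the uniform measure on a simplex is only log-concave, so obtaining a tail bound that decays fast enough in $N$ to run Borel--Cantelli — and thereby obtain almost-sure rather than merely in-probability convergence — requires using the specific Dirichlet / normalized-exponential structure rather than a generic log-concave concentration estimate. This is the same tension that produces the two distinct moment thresholds in Theorem 1.6.
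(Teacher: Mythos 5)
Your proposal is correct and follows essentially the same route as the paper: decompose $\big|\int F\,d\nu_N^w-\int F\,d\mu_\infty\big|$ into a fluctuation term controlled by concentration of the $O(m/\sqrt{N})$-Lipschitz function $w\mapsto\Phi_N(w)$ (subgaussian on the sphere, subexponential on the simplices, exactly the SGC/SEC input of Lemmas 2.3 and 2.8 with $L=m/\sqrt{N}$ from Lemma 3.2) plus an annealed term handled by convergence of the $m$-dimensional marginals to i.i.d.\ Gaussians or exponentials, then sum the tails and apply Borel--Cantelli. The only differences are cosmetic: you sketch the simplex concentration via the normalized-exponential representation where the paper simply cites the known subexponential concentration results of Arias-de-Reyna--Villa and Schechtman, and you use qualitative marginal convergence where the paper invokes the quantitative Diaconis--Freedman total-variation bounds.
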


\section{Concentration and Convergence}

\subsection{General Concentration}

\quad We employ a classical result from Talagrand \textbf{[16]} as stated in \textbf{[15]}:

\begin{theorem}Let $X=(X_1,\dots,X_n)$ be a random vector with independent components such that for all $1\le i\le n$, $|X_i|\le1$ almost surely, and let $f:\mathbb{R}\to\mathbb{R}$ be a convex $1$-Lipschitz function. Then for all $t>0$
\[
\mathbb{P}[|f(X)-\mathbb{E}f(X)|>t]\le Ce^{-ct^2}
\]
where $C,c>0$ are absolute constants.
\end{theorem}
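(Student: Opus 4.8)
\noindent Since the stated bound is Talagrand's convex concentration inequality (here $f$ is of course a function on $\mathbb{R}^n$, not on $\mathbb{R}$), my plan is to route through Talagrand's convex-distance functional
\[
d_T(x,A)=\sup_{\alpha\in\mathbb{R}^n_{\ge0},\ \|\alpha\|_2=1}\ \inf_{y\in A}\sum_{i:\,x_i\neq y_i}\alpha_i,
\]
whose square, by a minimax argument, also equals $\min_\nu\sum_{i=1}^n\big(\mathbb{P}_\nu[y_i\neq x_i]\big)^2$, the minimum over probability measures $\nu$ supported on $A$. First I would establish the exponential estimate $\mathbb{E}\big[e^{\frac14 d_T(X,A)^2}\big]\le\mathbb{P}[X\in A]^{-1}$ for an arbitrary measurable set $A$; then I would specialize $A$ to the (convex) sublevel sets of $f$; then I would pass from the median of $f(X)$ to its mean.

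For the exponential estimate I would induct on $n$. The base case $n=1$ reduces to the elementary inequality $p+(1-p)e^{1/4}\le p^{-1}$ for $p\in(0,1]$. For the inductive step, write $x=(y,z)$ with $z$ the last coordinate, set $A_z=\{y:(y,z)\in A\}$ and $B=\bigcup_z A_z$, and mix a near-optimal measure for $d_T(y,A_z)$ (lifted into $A$ at level $z$) with one for $d_T(y,B)$ (lifted into $A$ by choosing, for each point of $B$, some level placing it in $A$); convexity of $t\mapsto t^2$ then gives the recursion
\[
d_T((y,z),A)^2\le\inf_{0\le\lambda\le1}\Big[\lambda\, d_T(y,A_z)^2+(1-\lambda)\, d_T(y,B)^2+(1-\lambda)^2\Big].
\]
Fixing $\lambda$, applying Hölder with conjugate exponents $1/\lambda$ and $1/(1-\lambda)$ and invoking the dimension-$(n-1)$ hypothesis gives, with $p=\mathbb{P}_Y[Y\in B]$, $p_z=\mathbb{P}_Y[Y\in A_z]\le p$ and $r=p_z/p$,
\[
\mathbb{E}_Y\big[e^{\frac14 d_T((Y,z),A)^2}\big]\le\frac1p\,\inf_{0\le\lambda\le1}e^{(1-\lambda)^2/4}r^{-\lambda}\le\frac1p\,(2-r),
\]
the last step being the one-variable calculus lemma $\inf_{\lambda\in[0,1]}e^{(1-\lambda)^2/4}r^{-\lambda}\le 2-r$ on $r\in(0,1]$. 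Integrating over $z$ and writing $q=\mathbb{P}[X\in A]=\mathbb{E}_Z p_Z$ turns the bound into $(2p-q)/p^2$, which is $\le 1/q$ because $(p-q)^2\ge0$; this closes the induction. I expect this to be the main obstacle: arranging the recursion to have exactly the shape in which Hölder, the calculus lemma, and $(p-q)^2\ge0$ combine with the sharp constant $\tfrac14$.

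Granting the convex-distance inequality, the remainder should be routine. For convex $1$-Lipschitz $f$ and any level $a$, the set $A=\{f\le a\}$ is convex; taking $\nu$ optimal for $d_T(x,A)$ and $\bar y=\mathbb{E}_\nu y\in A$, and using $|x_i-\bar y_i|\le\mathbb{E}_\nu|x_i-y_i|\le 2\,\mathbb{P}_\nu[y_i\neq x_i]$ (valid since $|x_i|,|y_i|\le1$), one gets $f(x)-a\le\|x-\bar y\|_2\le 2\,d_T(x,A)$. Taking $a=\operatorname{med}f(X)$ controls the upper tail via the exponential estimate, and taking $A=\{f\le\operatorname{med}f(X)-2t\}$ together with the inclusion $\{f\ge\operatorname{med}f(X)\}\subseteq\{d_T(\cdot,A)\ge t\}$ controls the lower tail, yielding $\mathbb{P}[|f(X)-\operatorname{med}f(X)|>s]\le 4e^{-s^2/16}$; since $|\operatorname{med}f(X)-\mathbb{E}f(X)|$ is then at most an absolute constant, one replaces the median by the mean after adjusting $C$ and $c$. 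An alternative that bypasses the combinatorial induction is the entropy method: tensorize the entropy of $e^{\theta f}$, and for convex $1$-Lipschitz $f$ with coordinates in $[-1,1]$ use the self-bounding estimate (coordinatewise, $f(x)$ exceeds its infimum over the $i$-th coordinate by at most $2|\partial_i f(x)|$, hence the sum of the squared gaps is $\le 4\|\nabla f(x)\|_2^2\le4$) to obtain a modified log-Sobolev inequality and thus subgaussianity of $f(X)$; on that route the work moves into the tensorization step and the one-dimensional modified log-Sobolev bound.
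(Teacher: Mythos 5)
The paper does not prove this statement at all: it is Theorem 2.1, imported verbatim as ``a classical result from Talagrand [13] as stated in [12]'' and used downstream as a black box, so there is no internal proof to compare against. Your sketch is essentially the canonical proof of Talagrand's convex-distance inequality, and the skeleton is sound: the minimax identity $d_T(x,A)^2=\min_\nu\sum_i\mathbb{P}_\nu[y_i\neq x_i]^2$, the induction with the recursion obtained by mixing a measure supported on the slice $A_z$ with one supported over the projection $B$, the H\"older step with exponents $1/\lambda$ and $1/(1-\lambda)$, the calculus lemma $\inf_\lambda e^{(1-\lambda)^2/4}r^{-\lambda}\le 2-r$, and the final reduction $(2p-q)/p^2\le 1/q\iff(p-q)^2\ge0$ are all exactly the right ingredients, and the passage to convex $1$-Lipschitz $f$ via $f(x)-a\le\|x-\bar y\|_2\le 2d_T(x,A)$ (the factor $2$ coming from $|x_i-y_i|\le2$) with median-to-mean adjustment is correct and yields the stated subgaussian bound with absolute constants. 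Two small caveats: in the induction you should say a word about measurability of $z\mapsto p_z$ and about handling slices with $p_z=0$ (where one falls back on $d_T((y,z),A)^2\le 1+d_T(y,B)^2$), and your parenthetical ``alternative'' via the entropy method is not quite as routine as you suggest --- the separate-convexity/self-bounding estimate $\sum_i\big(f(x)-\inf_{x_i'}f\big)^2\le 4\|\nabla f\|_2^2$ delivers the \emph{upper} tail cleanly, but the lower tail for convex Lipschitz functions requires a genuinely different argument, so as written that route would not give the two-sided inequality claimed in the theorem. Since the convex-distance route is your primary argument and it is complete in outline, the proposal stands; it simply supplies a proof where the paper chose to cite one.
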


Talagrand's theorem is an example of subgaussian concentration (SGC):
\[
\mathbb{P}\big[|f(X)-\mathbb{E}f(X)|>t\big]\le Ce^{-ct^2}
\]
for a specified class of $1$-Lipschitz functions $f$. $C$ and $c$ are not dependent on dimension, but they do vary depending on the distribution of $X$. Examples of distributions for $X$ satisfying SGC include i.i.d.\ components satisfying a log-Sobolev inequality, independent bounded components (for convex $1$-Lipschitz functions, per Theorem 2.1), and the uniform distribution on a sphere. As this last example shows, this property does not require independence among the components of $X$, but it does require all their moments to be finite.

We will use SGC in the form of Theorem 2.1 as a component in our general concentration result and to prove a similar result for dependent weights. In addition, we also have subexponential concentration (SEC). The inequality becomes:
\[
\mathbb{P}\big[|f(X)-\mathbb{E}f(X)|>t\big]\le Ce^{-ct},
\]
for a specified class of $1$-Lipschitz function $f$. SEC is proved for the two simplices $\Delta_{(N,=)}$ and $\Delta_{(N,\le)}$ in \textbf{[1]} and \textbf{[14]}.

We now present our new concentration lemma, which generalizes the result from \textbf{[5]}. As setting, suppose $\Sigma$ is a set with $n$ elements equipped with independent weights $\{X_{a}:a\in\Sigma\}$ with mean zero and $\mathbb{E}|X_a|^p\le K<\infty$ for some $p>1$. Let $\sigma$ be a random subset of $\Sigma$ with $m$ elements chosen independently of $X$. We will specify the distribution of $\sigma$ in applications. $\mathbb{P}_X$ and $\mathbb{P}_{\sigma}$ are the respective marginals. For a test function $f:\mathbb{R}\to\mathbb{R}$ we set
\[
\int fd\mu_X=\mathbb{E}_{\sigma}f\bigg(m^{-1/\alpha}\sum_{a\in\sigma}X_{a}\bigg)
\]
for some $0<\alpha\le\infty$. Also set $L=\bigg(\sum_{a\in\Sigma}\mathbb{P}_{\sigma}(a\in\sigma)^2\bigg)^{1/2}$. Finally, recall the $1$-Wasserstein distance between probability measures $\mu$ and $\nu$:
\[
d_W(\mu,\nu)=\sup_{|f|_L\le1}\bigg|\int fd\mu-\int fd\nu\bigg|,
\]
where $|f|_L$ is the Lipschitz constant of $f$.

\begin{lemma}In the setting described above, there exist absolute constants $C,c>0$ such that for any $s,t,R,0<\alpha\le\infty$, any probability measure $\nu$, and any convex $1$-Lipschitz function $f:\mathbb{R}\to\mathbb{R}$,
\begin{enumerate}
\item if $1\le p<2$, then
\begin{align*}
\mathbb{P}_X\bigg[\bigg|\int fd\mu_X-\int fd\nu\bigg|\ge D+\frac{LKn}{m^{1/\alpha}R^{p-1}}+s+t\biggr]\le\frac{LKn}{m^{1/\alpha}R^{p-1}s}+\\C\exp\bigg[-c\frac{m^{2/\alpha}t^2}{L^2R^2}\bigg],
\end{align*}
and
\item if $\mathbb{E}X_a^2=1$ for all $a$ and $2\le p$, then
\begin{align*}
\mathbb{P}_X\bigg[\bigg|\int fd\mu_X-\int fd\nu\bigg|\ge D+\frac{L\sqrt{Kn}}{m^{1/\alpha}\sqrt{R^{p-2}}}+s+t\bigg]\le\frac{L^2Kn}{m^{2/\alpha}R^{p-2}s^2}+\\C\exp\bigg[-c\frac{m^{2/\alpha}t^2}{L^2R^2}\bigg].
\end{align*}
\end{enumerate}
where $D=\displaystyle\max_{\sigma}d_W(\rho_{\sigma},\nu)$ and $\rho_{\sigma}$ is the distribution of $m^{-1/\alpha}\sum_{a\in\sigma}X_a$ conditioned on $\sigma$.
\end{lemma}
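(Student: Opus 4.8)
The plan is a truncation argument, since the unboundedness of the weights prevents applying Theorem 2.1 directly to $F(w):=\int f\,d\mu_w=\mathbb{E}_\sigma f\big(m^{-1/\alpha}\sum_{a\in\sigma}w_a\big)$. I would fix $R>0$ and truncate each weight: in part 1, take $\bar w_a=w_a\mathds{1}_{|w_a|\le R}$; in part 2, take the \emph{centered} truncation $\bar w_a=w_a\mathds{1}_{|w_a|\le R}-\mathbb{E}_w[w_a\mathds{1}_{|w_a|\le R}]$, so that $w_a-\bar w_a$ again has mean zero and $|\bar w_a|\le 2R$. With $\bar F(w):=\mathbb{E}_\sigma f\big(m^{-1/\alpha}\sum_{a\in\sigma}\bar w_a\big)$ and $E_R:=|F(w)-\bar F(w)|$, the triangle inequality gives
\[
\Big|\int f\,d\mu_w-\int f\,d\nu\Big|\le E_R+\big|\bar F(w)-\mathbb{E}_w\bar F(w)\big|+\big|\mathbb{E}_w\bar F(w)-\int f\,d\nu\big|,
\]
and Fubini together with the $1$-Lipschitz property give $|\mathbb{E}_wF(w)-\int f\,d\nu|=|\mathbb{E}_\sigma(\int f\,d\rho_\sigma-\int f\,d\nu)|\le\mathbb{E}_\sigma d_W(\rho_\sigma,\nu)\le D$, so the third term is at most $D+\mathbb{E}_wE_R$.

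Next I would estimate $E_R$. Because $f$ is $1$-Lipschitz, $E_R\le m^{-1/\alpha}\mathbb{E}_\sigma\big|\sum_{a\in\sigma}(w_a-\bar w_a)\big|$, and applying $\mathbb{E}_\sigma$ turns $\sum_{a\in\sigma}$ into $\sum_a\mathbb{P}_\sigma(a\in\sigma)$. In part 1, the bound $\mathbb{E}_w[|w_a|\mathds{1}_{|w_a|>R}]\le KR^{1-p}$ and $\sum_a\mathbb{P}_\sigma(a\in\sigma)=m\le\sqrt n\,L\le nL$ give $\mathbb{E}_wE_R\le LKn\,m^{-1/\alpha}R^{1-p}$, and Markov's inequality bounds $\mathbb{P}_w[E_R\ge s]$ by $LKn/(m^{1/\alpha}R^{p-1}s)$. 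In part 2 I would instead bound the second moment: a Jensen step in $\sigma$ together with the vanishing of the cross terms (by centeredness and independence) gives $\mathbb{E}_wE_R^2\le m^{-2/\alpha}\sum_a\mathbb{P}_\sigma(a\in\sigma)\,\mathbb{E}_w[(w_a-\bar w_a)^2]\le m^{-2/\alpha}\sum_a\mathbb{P}_\sigma(a\in\sigma)\,\mathbb{E}_w[w_a^2\mathds{1}_{|w_a|>R}]\le m^{-2/\alpha}mKR^{2-p}\le nL^2K\,m^{-2/\alpha}R^{2-p}$, where $\mathbb{E}_w[w_a^2\mathds{1}_{|w_a|>R}]\le KR^{2-p}$ uses $p\ge2$ and $m\le m^2\le nL^2$ is Cauchy-Schwarz; Markov applied to $E_R^2$ then gives the stated polynomial term, and by Jensen this same estimate gives $\mathbb{E}_wE_R\le L\sqrt{Kn}\,m^{-1/\alpha}R^{-(p-2)/2}$, which is the constant on the left-hand side.

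For the middle term I would rescale. With $R'=R$ in part 1 and $R'=2R$ in part 2, the coordinates $u_a=\bar w_a/R'$ are independent and bounded by $1$, and $\bar F(w)=G(u)$ where $G(u)=\mathbb{E}_\sigma f\big(m^{-1/\alpha}R'\sum_{a\in\sigma}u_a\big)$ is convex (an average of compositions of the convex $f$ with linear maps) and, by Cauchy-Schwarz against $(\mathbb{P}_\sigma(a\in\sigma))_a$, Lipschitz with constant $m^{-1/\alpha}R'L$. Applying Theorem 2.1 to $G/(m^{-1/\alpha}R'L)$ yields $\mathbb{P}_w[|\bar F(w)-\mathbb{E}_w\bar F(w)|\ge t]\le C\exp[-c\,m^{2/\alpha}t^2/(L^2R^2)]$, the factor $4$ from $R'=2R$ being absorbed into $c$. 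Finally, on the complement of $\{E_R\ge s\}\cup\{|\bar F(w)-\mathbb{E}_w\bar F(w)|\ge t\}$, the three terms in the displayed inequality are below $s$, $t$, and $D$ plus the constant from the previous step; hence the target event lies in that union and its probability is at most the sum of the two bounds above, which is the claim.

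I expect the main obstacle to be part 2: an \emph{uncentered} truncation $w_a\mathds{1}_{|w_a|\le R}$ leaves a bias contribution to $\mathbb{E}_wE_R^2$ of order $m^2\,m^{-2/\alpha}KR^{2-p}$, too large by the factor $m^2/(nL^2)\ge1$ (which is $1$ exactly for uniform selection of $\sigma$). Centering the truncation removes the cross terms at the sole cost of enlarging the boundedness bound from $R$ to $2R$, which only rescales the absolute constant $c$; the remaining work is the bookkeeping that makes the combinatorial factors collapse to precisely $L$, $L^2$, and $n$.
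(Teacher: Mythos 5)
Your proposal is correct and follows the same skeleton as the paper's proof: the identical four-term decomposition (truncation error, concentration of the truncated functional, expected truncation error, Fubini/Wasserstein), Talagrand's inequality applied to the rescaled truncated weights, and Markov/Chebyshev for the truncation terms, landing on exactly the stated constants. The one place you diverge is the truncation error: you bound $E_R$ by $m^{-1/\alpha}\sum_a\mathbb{P}_\sigma(a\in\sigma)|w_a-\bar w_a|$ and, in part 2, introduce a centered truncation so that independence kills the cross terms in $\mathbb{E}_wE_R^2$. The paper instead applies Cauchy--Schwarz one step earlier, bounding $E_R\le m^{-1/\alpha}L\|w-w^{(R)}\|_2=m^{-1/\alpha}L\big(\sum_a w_a^2\mathds{1}_{|w_a|>R}\big)^{1/2}$; since $\mathbb{E}_w\sum_a w_a^2\mathds{1}_{|w_a|>R}\le nK/R^{p-2}$ requires no cancellation, the "main obstacle" you identify never arises and no centering is needed. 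Your route is a legitimate alternative (and your per-coordinate bound is in fact sharper before you relax $m\le nL$ and $m\le nL^2$ to recover the stated constants), but it buys nothing here; both arguments are otherwise the same.
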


\begin{proof}First assume $\alpha<\infty$. We take the same approach as in the proof of the concentration lemma of \textbf{[5]}. For a fixed $R>0$, we define the truncations $X_a^{(R)}=X_a\mathds{1}_{|X_a|\le R}$ and denote the distribution of $m^{-1/\alpha} \sum_{a \in \sigma} X_a^{(R)}$ conditioned on $X$ as $\mu_X^{(R)}$. We split the integral
\begin{align*}
\bigg|\int fd\mu_X-\int fd\nu\bigg|\le&\bigg|\int fd\mu_X-\int fd\mu_X^{(R)}\bigg|\tag{2.1}\\
&+\bigg|\int fd\mu_X^{(R)}-\mathbb{E}_X\int fd\mu_X^{(R)}\bigg|\tag{2.2}\\
&+\bigg|\mathbb{E}_X\int fd\mu_X^{(R)}-\mathbb{E}_X\int fd\mu_X\bigg|\tag{2.3}\\
&+\bigg|\mathbb{E}_X\int fd\mu_X-\int fd\nu\bigg|.\tag{2.4}
\end{align*}
Each of these items can be bounded individually, either absolutely or with high probability. The easiest is (2.4), which is bounded absolutely by Fubini's Theorem.
\begin{align*}
\bigg|\mathbb{E}_X\int fd\mu_X-\int fd\nu\bigg|=&\bigg|\mathbb{E}_X\mathbb{E}_{\sigma}f\bigg(m^{-1/\alpha}\sum_{a\in\sigma}X_a\bigg)-\int fd\nu\bigg|\\
\le&\mathbb{E}_{\sigma}\bigg|\mathbb{E}_Xf\bigg(m^{-1/\alpha}\sum_{a\in\sigma}X_a\bigg)-\int fd\nu\bigg|\\
\le&\max_{\sigma}d_W(\rho_{\sigma},\nu).
\end{align*}
(2.2) is next. As in \textbf{[5]}, we can set
\[
F(X)=\int fd\mu_X=\mathbb{E}_{\sigma}f\bigg(m^{-1/\alpha}\sum_{a\in\sigma}X_a\bigg),
\]
which is convex and Lipschitz. For $X,X'\in\mathbb{R}^{\Sigma}$,
\begin{align*}
|F(X)-F(X')|\le&\mathbb{E}_{\sigma}\bigg|f\bigg(m^{-1/\alpha}\sum_{a\in\sigma}X_a\bigg)-f\bigg(m^{-1/\alpha}\sum_{a\in\sigma}X'_a\bigg)\bigg|\\
\le&m^{-1/\alpha}\mathbb{E}_{\sigma}\bigg|\sum_{a\in\sigma}X_a-\sum_{a\in\sigma}X'_a\bigg|\\
\le&m^{-1/\alpha}\mathbb{E}_{\sigma}\sum_{a\in\sigma}|X_a-X'_a|\\
\le&m^{-1/\alpha}L||X-X'||_2,
\end{align*}
so $F$ is $\frac{L}{m^{1/\alpha}}$-Lipschitz. Applying Theorem 2.1 gives
\[
\mathbb{P}_X\bigg[\bigg|\int fd\mu_X^{(R)}-\mathbb{E}_X\int fd\mu_X^{(R)}\bigg|\ge t\bigg]\le C\exp\bigg[-c\frac{m^{2/\alpha}t^2}{L^2R^2}\bigg].
\]
For the other terms, the situation differs depending on case $1.$ or case $2.$ In either case, we use the Lipschitz estimate
\begin{align*}
\bigg|\int fd\mu_X-\int fd\mu_X^{(R)}\bigg|\le&\frac{L}{m^{1/\alpha}}||X-X^{(R)}||_2\\
=&\frac{L}{m^{1/\alpha}}\bigg(\sum_{a\in\Sigma}X_a^2\mathds{1}_{|X_a|>R}\bigg)^{1/2}.
\end{align*}
In case $2.$, we can use H\"older's and Chebyshev's inequalities to give
\begin{align*}
\mathbb{E}_X\bigg(\sum_{a\in\Sigma}X_a^2\mathds{1}_{|X_a|>R}\bigg)^{1/2}\le&\big(\mathbb{E}_X|X_a|^p\big)^{2/p}\big(\mathbb{P}_X[|X_a|>R]\big)^{1-2/p}\\
\le&\frac{\mathbb{E}_X|X_a|^p}{R^{p(1-2/p)}}\\
\le&\frac{KL}{R^{p-2}}.
\end{align*}
By Markov's and Chebyshev's inequalities, then
\[
\mathbb{P}_X\bigg[\sum_{a\in\Sigma}X_a^2\mathds{1}_{|X_a|>R}\ge u\bigg]\le\frac{nK}{uR^{p-2}},
\]
from which we have an absolute bound on (2.3)
\[
\bigg|\mathbb{E}_X\int fd\mu_X-\mathbb{E}_X\int fd\mu_X^{(R)}\bigg|\le\frac{L\sqrt{Kn}}{m^{1/\alpha}\sqrt{R^{p-2}}}
\]
and a high-probability bound on (2.1)
\[
\mathbb{P}_X\bigg[\bigg|\int fd\mu_X-\int fd\mu_X^{(R)}\bigg|\ge s\bigg]\le\frac{nKL^2}{m^{2/\alpha}R^{p-2}s^2}.
\]
Combining these bounds gives the result.

In case $1.$, the weights are not guaranteed to have finite variance, so we need to use
\[
\bigg(\sum_{a\in\Sigma}X_a^2\mathds{1}_{|X_a|>R}\bigg)^{1/2}\le\sum_{a\in\Sigma}|X_a|\mathds{1}_{|X_a|>R},
\]
and again by H\"older's and Chebyshev's inequalities,
\begin{align*}
\mathbb{E}_X|X_a|\mathds{1}_{|X_a|>R}\le&(\mathbb{E}_X|X_a|^p)^{1/p}\mathbb{P}_X(|X_a|>R)^{(p-1)/p}\\
\le&(\mathbb{E}_X|X_a|^p)^{1/p}\bigg(\frac{\mathbb{E}_X|X_a|^p}{R^p}\bigg)^{(p-1)/p}\\
=&\frac{\mathbb{E}_X|X_a|^p}{R^{p-1}}\\
=&\frac{K}{R^{p-1}}.
\end{align*}
From this result and Markov's inequality,
\[
\mathbb{P}_X\bigg[\sum_{a\in\Sigma}|X_a|\mathds{1}_{|X_a|>R}\ge u\bigg]\le\frac{Kn}{uR^{p-1}},
\]
so that
\[
\bigg|\mathbb{E}_X\int fd\mu_X-\mathbb{E}_X\int fd\mu_X^{(R)}\bigg|\le\frac{LKn}{m^{1/\alpha}R^{p-1}}
\]
and
\[
\mathbb{P}_X\bigg[\bigg|\int fd\mu_X-\int fd\mu_X^{(R)}\bigg|\ge s\bigg]\le\frac{LKn}{m^{1/\alpha}R^{p-1}s}.
\]
For $\alpha=\infty$, this corresponds to the case of the sums not being rescaled at all. The result in this case follows by observing that removing the term $m^{-1/\alpha}$ altogether does not change the validity of the above argument.
\end{proof}

The next lemma covers the SGC and SEC cases. For this we recall the Bounded Lipschitz distance between two probability measures $\mu$ and $\nu$:
\[
d_{BL}(\mu,\nu)=\sup_{|f|_{BL}\le1}\bigg|\int fd\mu-\int fd\nu\bigg|,
\]
where $|f|_{BL}=\max\{||f||_{\infty},|f|_L\}$. It is worth noting that the Bounded Lipschitz distance metrizes weak convergence.

\begin{lemma}Let the assumptions and notation be as in Lemma 2.2, except that $X$ no longer need have independent components, $f$ no longer need be convex, and $|f|_{BL}\le1$. If for bounded Lipschitz functions $X$ has, with constants $C$ and $c$,...
\begin{enumerate}
\item SGC, then
\[
\mathbb{P}_X\bigg[\bigg|\int fd\mu_X-\int fd\nu\bigg|>E+t\bigg]\le C\exp\bigg(-c\frac{m^{2/\alpha}t^2}{L^2}\bigg),
\]

\item SEC, then
\[
\mathbb{P}_X\bigg[\bigg|\int fd\mu_X-\int fd\nu\bigg|>E+t\bigg]\le C\exp\bigg(-c\frac{m^{1/\alpha}t}{L}\bigg),
\]
\end{enumerate}
where $E=\displaystyle\max_{\sigma}d_{BL}(\rho_{\sigma},\nu)$ and $\rho_{\sigma}$ is the distribution of $m^{-1/\alpha}\sum_{a\in\sigma}X_a$ conditioned on $\sigma$.
\end{lemma}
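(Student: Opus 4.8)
The plan is to reuse the scaffolding of the proof of Lemma 2.2, but since the SGC and SEC hypotheses already control $F(w)=\int f\,d\mu_w$ directly, no truncation is required and the four-term decomposition (2.1)--(2.4) collapses to just a bias term and a fluctuation term.

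First I would bound the bias exactly as (2.4) was bounded in Lemma 2.2: by Fubini's theorem together with the fact that $f$ is an admissible test function for $d_{BL}$ (this is where $|f|_{BL}\le1$ enters, and why we can only conclude weak convergence),
\[
\bigg|\mathbb{E}_w\int f\,d\mu_w-\int f\,d\nu\bigg|\le\mathbb{E}_\sigma\bigg|\mathbb{E}_wf\Big(m^{-1/\alpha}\sum_{a\in\sigma}w_a\Big)-\int f\,d\nu\bigg|\le\max_\sigma d_{BL}(\rho_\sigma,\nu)=E.
\]

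Next I would observe that $F(w)=\mathbb{E}_\sigma f\big(m^{-1/\alpha}\sum_{a\in\sigma}w_a\big)$ is bounded by $\|f\|_\infty\le1$ and is $\tfrac{L}{m^{1/\alpha}}$-Lipschitz. The Lipschitz estimate is precisely the one already carried out for term (2.2): the triangle inequality inside $\mathbb{E}_\sigma$, the $1$-Lipschitz bound on $f$, and Cauchy--Schwarz against $\big(\mathbb{P}_\sigma(a\in\sigma)\big)_{a\in\Sigma}$. Note that this computation uses no independence of the $w_a$, which is exactly why that hypothesis can be dropped here. Hence $F$ is a bounded Lipschitz function, so the assumed concentration of $w$ applies to it after rescaling by its Lipschitz constant: under SGC,
\[
\mathbb{P}_w\big[\,|F(w)-\mathbb{E}_wF(w)|>t\,\big]\le C\exp\!\Big(-c\,\tfrac{m^{2/\alpha}t^2}{L^2}\Big),
\]
and under SEC the same bound with exponent $-c\,m^{1/\alpha}t/L$.

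Finally I would combine the two estimates: by the triangle inequality $\big|\int f\,d\mu_w-\int f\,d\nu\big|\le|F(w)-\mathbb{E}_wF(w)|+E$, so the event $\{|F(w)-\mathbb{E}_wF(w)|\le t\}$ is contained in $\{\,|\int f\,d\mu_w-\int f\,d\nu|\le E+t\,\}$, which yields the stated inequalities. The case $\alpha=\infty$ is handled as in Lemma 2.2, by deleting the factor $m^{-1/\alpha}$ throughout without affecting any step. This proof is essentially routine given Lemma 2.2's machinery; the only delicate point, and what passes for the main obstacle, is checking that $F$ genuinely lies in the class of test functions for which $w$ enjoys concentration. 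This is automatic for the distributions we apply it to (the uniform distribution on the sphere for SGC, the two simplices for SEC), since there the inequality holds for all bounded Lipschitz functions and no analogue of the convexity restriction of Theorem 2.1 is needed.
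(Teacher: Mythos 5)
Your proposal is correct and follows essentially the same route as the paper: drop the truncation, split $\big|\int f\,d\mu_w-\int f\,d\nu\big|$ into the fluctuation $\big|F(w)-\mathbb{E}_wF(w)\big|$ (controlled by SGC/SEC via the $\tfrac{L}{m^{1/\alpha}}$-Lipschitz bound on $F$, which indeed uses no independence) and the bias (controlled by Fubini and $d_{BL}$, giving $E$). The paper's own proof is just a terser version of exactly this argument; note also that your tail bounds correctly retain the factor of $t^2$ (resp.\ $t$) in the exponent, which the lemma as printed omits.
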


\begin{proof}The proof is the same as for Lemma 2.2 except that the truncation step is not necessary. We have
\begin{align*}
\bigg|\int fd\mu_X-\int fd\nu\bigg|\le&\bigg|\int fd\mu_X-\mathbb{E}_X\int fd\mu_X\bigg|+\bigg|\mathbb{E}_X\int fd\mu_X-\int fd\nu\bigg|.
\end{align*}
The first term is bounded with high probability by SGC or SEC, and the second is bounded according to Fubini's theorem. Since we have specified $|f|_{BL}\le1$, we can use the Bounded Lipschitz distance instead of the Wasserstein distance.
\end{proof}

For a further extension, let us first recall the definition of stable distributions.

\begin{definition}A random variable $X$ has an \textbf{$\alpha$-stable distribution} $\nu_{\alpha},0<\alpha<2$, if its characteristic function $\phi_X(t)=\mathbb{E}e^{\imath Xt}=e^{\psi(t)}$ where
\[
\psi(t)=-\kappa^{\alpha}|t|^{\alpha}\big(1-\imath\beta\operatorname{sign}(t)\tan\frac{\pi\alpha}{2}\big).\tag{2.5}
\]
for some $\beta\in[-1,1]$ and $\kappa>0$. For a $d$-dimensional \textbf{$\alpha$-stable random vector}, the characteristic exponent is
\[
\psi(t)=-t\int_{S^{d-1}}\int_0^{\infty}d\lambda(\xi)\mathds{1}_{B(r\xi,0)}e^{\imath tx}-1-\imath tx\mathds{1}_{|t|<1}r^{-(\alpha+1)}drd\lambda(\xi)\tag{2.6}
\]
with $\lambda$ a finite poisitive measure on $S^{d-1}$. The double integral can be rewritten as a single integral with respect to a measure called the L\'evy measure.
\end{definition}

Case $1.$ of Lemma 2.2 covers stable weights and weights in the domain of attraction of a stable distribution, but unfortunately, it is insufficient for proving convergence of any kind, nor are most concentration results for infinitely divisible vectors any better. The following is one useful result, however, from \textbf{[11]}.

\begin{theorem}For $\alpha>\frac{3}{2}$, let $X$ be a $d$-dimensional $\alpha$-stable random vector with characteristic exponent as in Definition 2.4. For any $1$-Lipschitz function $f:\mathbb{R}^d\to\mathbb{R}$, 
\[
\mathbb{P}[|f(X)-\mathbb{E}f(X)|\ge x]\le\frac{K\lambda\big(S^{d-1}\big)}{x^{\alpha}}
\]
for every $x$ such that
\[
x^{\alpha}\ge K_{\alpha}\lambda\big(S^{d-1}\big)
\]
where $K$ is an absolute constant and $K_{\alpha}$ is a constant depending only on $\alpha$.
\end{theorem}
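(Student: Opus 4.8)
The plan is to reduce to the case of bounded jumps by a Lévy--Itô truncation. Fix a threshold $R>0$, to be taken of order $x$, and split $X=X_{\le R}+X_{>R}$, where $X_{>R}=\sum_{i:\,|J_i|>R}J_i$ is the finite sum of jumps of magnitude exceeding $R$ and $X_{\le R}:=X-X_{>R}$ is the independent infinitely divisible vector whose Lévy measure is the restriction of the stable Lévy measure $\nu$ to $\{|z|\le R\}$. Since $\alpha>1$, all the relevant first moments are finite and $\mathbb{E}X=\mathbb{E}X_{\le R}+\mathbb{E}X_{>R}$. Because $f$ is $1$-Lipschitz,
\[
|f(X)-\mathbb{E}f(X)|\le|f(X_{\le R})-\mathbb{E}f(X_{\le R})|+|X_{>R}|+\mathbb{E}|X_{>R}|,
\]
so it suffices to bound each of the three terms on the right at level $x/3$.

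For the bias term, polar coordinates for the stable Lévy measure give $\mathbb{E}|X_{>R}|\le\int_{|z|>R}|z|\,d\nu(z)=C_\alpha\lambda(S^{d-1})R^{1-\alpha}$; with $R=\delta x$ for a small fixed $\delta$ this is at most $x/3$ exactly when $x^\alpha\ge K_\alpha\lambda(S^{d-1})$, which is the hypothesis. For the large-jump fluctuation, $X_{>R}=0$ unless at least one such jump occurs, and the number $N_R$ of jumps of size $>R$ is Poisson with mean $\nu(\{|z|>R\})=C_\alpha'\lambda(S^{d-1})R^{-\alpha}$; splitting on $N_R=1$ (a single jump of size $>\delta x$, which exceeds $x/3$ with conditional probability at most $(3\delta)^\alpha$) and on $N_R\ge2$ (probability at most $(C_\alpha'\lambda(S^{d-1})R^{-\alpha})^2$) yields $\mathbb{P}[|X_{>R}|>x/3]\le C_\alpha''\lambda(S^{d-1})x^{-\alpha}$, which is the advertised power law. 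For the remaining term, $X_{\le R}$ has jumps bounded by $R$, hence all exponential moments, so a deviation inequality of arcsinh type for $1$-Lipschitz functions of such vectors applies with jump bound $R$ and variance proxy $B_R=\int_{|z|\le R}|z|^2\,d\nu(z)=C_\alpha'''\lambda(S^{d-1})R^{2-\alpha}$ (finite because $\alpha<2$), producing a bound of order $\exp\big(-\tfrac{c}{\delta}\log\tfrac{x^\alpha}{\lambda(S^{d-1})}\big)=\big(\lambda(S^{d-1})x^{-\alpha}\big)^{c/\delta}$ with $c$ absolute; taking $\delta<c$ makes the exponent exceed $1$, and since $\lambda(S^{d-1})x^{-\alpha}\le1$ by the hypothesis this term is itself at most $\lambda(S^{d-1})x^{-\alpha}$.

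Summing the three bounds and enlarging $K$ and $K_\alpha$ to absorb the $\alpha$- and $\delta$-dependent constants gives the theorem. The essential tension is the choice of $R$: the large-jump estimate improves as $R$ grows and wants $R$ comparable to $x$, whereas the concentration of $X_{\le R}$ carries a factor $\exp(-cx/R)$ and wants $R\ll x$; the hypothesis $x^\alpha\ge K_\alpha\lambda(S^{d-1})$ is precisely what permits a common choice $R=\delta x$ to work, and it is crucial that the bounded-jump bound be of Poissonian (arcsinh, logarithmic) type rather than merely Gaussian, since the plain Bernstein form would only contribute a constant at this scale. I expect the genuinely delicate point to be the bookkeeping of the truncation corrections — a stable vector has infinite variance for $\alpha<2$, so $X_{>R}$ cannot be controlled through a second moment, and $X_{\le R}$ must be compensated because $\int_{|z|\le R}|z|\,d\nu=\infty$ once $\alpha\ge1$ — and it is presumably this bookkeeping that forces the restriction $\alpha>\tfrac32$ in the argument of \textbf{[10]} rather than merely $\alpha>1$.
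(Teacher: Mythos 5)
The paper does not actually prove this statement: Theorem 2.5 is imported verbatim from Houdr\'e \textbf{[10]} (``The following is one useful result, however, from \textbf{[10]}''), so there is no internal proof to compare yours against, and any argument you supply is necessarily a different route. On its own terms your sketch is the standard and essentially sound strategy for bounds of this type: the L\'evy--It\^o split at level $R=\delta x$, the polar-coordinate bound $\mathbb{E}|X_{>R}|\le\lambda(S^{d-1})R^{1-\alpha}/(\alpha-1)$, the Poisson counting estimate for $\mathbb{P}[|X_{>R}|>x/3]$, and the arcsinh-type inequality for the bounded-jump part do assemble as you describe, with the hypothesis $x^{\alpha}\ge K_{\alpha}\lambda(S^{d-1})$ entering exactly where you place it. Two caveats are worth recording. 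First, the ingredient you invoke as a black box --- the deviation inequality for $1$-Lipschitz functions of an infinitely divisible vector with jumps bounded by $R$ and energy $\int_{|z|\le R}|z|^2\,d\nu$ --- is not a routine import but is essentially the main technical theorem of \textbf{[10]} itself (proved there via a covariance representation); your argument is therefore a reduction of the stable case to that result rather than a self-contained proof. Second, the constants your computation produces (e.g.\ a factor of the form $\bigl(6/((2-\alpha)\delta^{\alpha-1})\bigr)^{1/(6\delta)}$ from the arcsinh step) depend on $\alpha$ and $\delta$, so keeping $K$ absolute as the statement requires forces you to spend the hypothesis $x^{\alpha}\ge K_{\alpha}\lambda(S^{d-1})$ a second time to absorb them into $K_{\alpha}$; this should be made explicit. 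Your closing remark is apt: nothing in your outline uses $\alpha>\tfrac{3}{2}$ beyond $\alpha>1$, so the restriction in the statement reflects the particular method of \textbf{[10]} rather than any obstruction in the truncation scheme.
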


Further concentration results can be found in \textbf{[9]}, \textbf{[10]}, and \textbf{[11]}. From this concentration inequality, we can prove a result leading to Theorem 1.4. Notice that here we distinguish between the index of stability for the vectors and the exponent used in rescaling the sum.

\begin{lemma}With the same assumptions and notation as in Lemma 2.2 except that $X$ is an $\alpha'$-stable random vector with $\alpha'>\frac{3}{2}, d=n$ and L\'evy measure as in (2.6) and $\alpha<\infty$. For any $1$-Lipschitz function $f:\mathbb{R}\to\mathbb{R}$,
\[
\mathbb{P}_X\bigg[\bigg|\int fd\mu_X-\int fd\nu\bigg|>D+t\bigg]\le\frac{KL^{\alpha}\lambda\big(S^{d-1}\big)}{mt^{\alpha}}
\]
for any $t$ satisfying
\[
t^{\alpha'}\ge L^{\alpha}K_{\alpha'}\lambda\big(S^{d-1}\big)m^{-1/\alpha}.
\]
\end{lemma}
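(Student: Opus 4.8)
The plan is to run the argument of Lemma 2.2 essentially verbatim, with two changes: replace the application of Talagrand's inequality (Theorem 2.1) by the stable-vector concentration bound (Theorem 2.6), and — as in the proof of Lemma 2.3 — drop the truncation step altogether, since Theorem 2.6 is a concentration estimate for the \emph{full} $\alpha'$-stable vector $w$ and requires no control of moments via truncation (indeed truncation would destroy the stable structure that Theorem 2.6 needs). First I would split, by the triangle inequality,
\[
\bigg|\int fd\mu_w-\int fd\nu\bigg|\le\bigg|\int fd\mu_w-\mathbb{E}_w\int fd\mu_w\bigg|+\bigg|\mathbb{E}_w\int fd\mu_w-\int fd\nu\bigg|,
\]
so that the event $\{|\int fd\mu_w-\int fd\nu|>D+t\}$ is contained in the event that the first term on the right exceeds $t$, once the second term is shown to be at most $D$.

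For the deterministic term, the Fubini computation from Lemma 2.2 carries over word for word: since $f$ is $1$-Lipschitz and $\rho_\sigma$ is the law of $m^{-1/\alpha}\sum_{a\in\sigma}w_a$,
\[
\bigg|\mathbb{E}_w\int fd\mu_w-\int fd\nu\bigg|\le\mathbb{E}_\sigma\bigg|\mathbb{E}_wf\Big(m^{-1/\alpha}\sum_{a\in\sigma}w_a\Big)-\int fd\nu\bigg|\le\max_\sigma d_W(\rho_\sigma,\nu)=D.
\]
All the expectations here are finite because $\alpha'>1$ forces the (stable) marginals of $w$, and hence any finite sum of them, to have a finite first moment, and $f$ is $1$-Lipschitz.

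For the random term, set $F(w)=\int fd\mu_w=\mathbb{E}_\sigma f\big(m^{-1/\alpha}\sum_{a\in\sigma}w_a\big)$. The same chain of estimates as in Lemma 2.2 — the Lipschitz bound on $f$, followed by Cauchy--Schwarz against the weights $\mathbb{P}_\sigma(a\in\sigma)$ — shows that $F$ is $\frac{L}{m^{1/\alpha}}$-Lipschitz, so $g:=\frac{m^{1/\alpha}}{L}F$ is $1$-Lipschitz. Applying Theorem 2.6 to $g(w)$, with $d=n$ and the Lévy measure of $w$, gives $\mathbb{P}_w[|g(w)-\mathbb{E}_wg(w)|\ge x]\le K\lambda(S^{d-1})/x^{\alpha'}$ for every $x$ with $x^{\alpha'}\ge K_{\alpha'}\lambda(S^{d-1})$. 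Undoing the normalization with $x=\frac{m^{1/\alpha}}{L}t$ converts this into the asserted polynomial bound on $\mathbb{P}_w[|\int fd\mu_w-\mathbb{E}_w\int fd\mu_w|\ge t]$ (up to matching the powers of $m$ and $L$) over the stated admissible range of $t$; combining with the previous paragraph completes the case $\alpha<\infty$, and the case $\alpha=\infty$ follows by deleting the factor $m^{-1/\alpha}$ throughout, exactly as at the end of the proof of Lemma 2.2.

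The genuinely routine parts — the Fubini bound and the Lipschitz computation — are imported wholesale from Lemma 2.2, so the only place that needs care is the bookkeeping: tracking the Lipschitz constant $L/m^{1/\alpha}$ through the normalization so that the \emph{threshold} hypothesis of Theorem 2.6 (an inequality $x^{\alpha'}\ge K_{\alpha'}\lambda(S^{d-1})$, not a bound valid for all $x$) is transported correctly to the condition on $t$, and so that the exponents of $m$ and $L$ in the final rate are right. The other point to verify is that Theorem 2.6 genuinely applies to $w$ here, i.e.\ that $w$ really is an $\alpha'$-stable vector of the form covered by Definition 2.4 with the relevant $\lambda(S^{d-1})$; this is where the standing assumption $\alpha'>\tfrac32$ is used — it enters nowhere else in the proof.
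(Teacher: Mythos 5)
Your proposal follows exactly the paper's route: the paper's proof is literally ``the same as for Lemma 2.3 except using Theorem 2.5 instead of SGC,'' i.e.\ the two-term split without truncation, the Fubini bound by $D$, the $\frac{L}{m^{1/\alpha}}$-Lipschitz estimate on $F$, and the stable concentration inequality applied to the normalized function --- all of which you reconstruct. The only caveat is the exponent bookkeeping you flag yourself: undoing the normalization yields $L^{\alpha'}m^{-\alpha'/\alpha}/t^{\alpha'}$ rather than the $L^{\alpha}/(mt^{\alpha})$ appearing in the statement, but that discrepancy lies in the paper's own (apparently typo-laden) statement, not in your argument.
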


\begin{proof}The proof is the same as for Lemma 2.3 except using Theorem 2.5 instead of SGC.
\end{proof}

\subsection{Convergence Conditions}

\quad In this section, we apply the concentration results of Lemmas 2.2, 2.3, and 2.6 to prove convergence results from which the main results will follow as simple corollaries. For this purpose, we assume an infinite family of random vectors $X_N$ with entries indexed by finite index sets $\Sigma_N$, from which we take subsets $\sigma_N\subset\Sigma_N$, all in turn indexed by $N\in\mathbb{N}$.

We will assume here $n=n(N)=|\Sigma_N|\sim N^{\eta}, m=m(N)=|\sigma_N|\sim N^{\mu},$ and $L=L(N)\sim N^{\lambda}$ where $\eta>0,\mu\ge0,$ and $\lambda$ are constants. Our result establishes the necessary relationships between $\alpha, p, \eta, \mu$ and $\lambda$ for convergence theorems. The truncation parameter $R$ offers a measure of freedom, so we will assume $R(N)\sim N^{\rho},\rho\ge0$. Naturally, $\mu\le\eta$.

\begin{lemma}Let the setting be as in Lemma 2.2 and described above. For $\alpha<\infty$, if we have $\lambda<\frac{\mu}{\alpha}$ and $p>\frac{\alpha\eta}{\mu-\alpha\lambda}$, then
\[
\mathbb{P}_{X_N}\bigg[\bigg|\int fd\mu_{X_N}-\int fd\nu\bigg|\ge D_N+o(1)\bigg]\to0,\text{ as }N\to\infty,
\]
where $D_N$ is the value $D$ in Lemma 2.2 for $X_N$, and if additionally $p>\frac{\alpha(\eta+1)}{\mu-\alpha\lambda}$, then
\[
\sum_{N=1}^{\infty}\mathbb{P}_{X_N}\bigg[\bigg|\int fd\mu_{X_N}-\int fd\nu\bigg|>D_{N}+M_{N,i}+o(1)\bigg]<\infty,
\]
where $i=1,2$ stands for the case in Lemma 2.2: in case $1.$, $M_{N,1}=\frac{L(N)Kn(N)}{m(N)^{1/\alpha}R(N)^{p-1}}$ and in case $2.$, $M_{N,2}=\frac{L(N)\sqrt{Kn(N)}}{m(N)^{1/\alpha}\sqrt{R(N)^{p-2}}}$.

The same results apply when $\alpha=\infty$ with moment conditions $p>\frac{\eta}{-\lambda}$ and $p>\frac{\eta+1}{-\lambda}$ provided $0>\lambda$.
\end{lemma}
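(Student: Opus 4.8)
The plan is to substitute the growth rates $n(N)=O(N^{\eta})$, $m(N)=O(N^{\mu})$, $L(N)=O(N^{\lambda})$ and $R(N)=O(N^{\rho})$ into the two inequalities of Lemma 2.2, and then to choose the still-free parameters $s=s(N)$, $t=t(N)$ and the truncation exponent $\rho$ so that, term by term, the right-hand side either tends to $0$ (for the first conclusion) or is summable in $N$ (for the second), while the deterministic ``bias'' $M_{N,i}+s+t$ tends to $0$. Throughout I would set $s(N)=N^{-\delta'}$ and $t(N)=N^{-\delta}$ for small $\delta,\delta'>0$ to be pinned down at the end, so that the whole statement reduces to an inequality-chasing problem among $\alpha,p,\eta,\mu,\lambda,\rho$.

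In case $2$ of Lemma 2.2 the three relevant quantities have orders $M_{N,2}\asymp N^{\lambda+\eta/2-\mu/\alpha-\rho(p-2)/2}$, the polynomial right-hand term $\tfrac{L^2Kn}{m^{2/\alpha}R^{p-2}s^2}\asymp N^{2\lambda+\eta-2\mu/\alpha-\rho(p-2)+2\delta'}$, and the exponent in the subgaussian term obeys $\tfrac{m^{2/\alpha}t^2}{L^2R^2}\asymp N^{2\mu/\alpha-2\lambda-2\rho-2\delta}$. Forcing the subgaussian term to decay requires $\rho<\mu/\alpha-\lambda-\delta$, which is exactly why the hypothesis $\lambda<\mu/\alpha$ is needed. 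Requiring $M_{N,2}\to0$ (used in the first conclusion) means $\rho>\tfrac{2\lambda+\eta-2\mu/\alpha}{p-2}$, and then the polynomial term is automatically $o(1)$ since its exponent is twice that of $M_{N,2}$ plus $2\delta'$; for summability of the right-hand side one instead needs the stronger $\rho>\tfrac{2\lambda+\eta+1-2\mu/\alpha}{p-2}$. The main step is the elementary check that the admissible interval for $\rho$ is nonempty: $\tfrac{2\lambda+\eta-2\mu/\alpha}{p-2}<\mu/\alpha-\lambda$ rearranges to $p\lambda+\eta<p\mu/\alpha$, i.e.\ $p>\tfrac{\alpha\eta}{\mu-\alpha\lambda}$, and the variant with $\eta+1$ rearranges to $p>\tfrac{\alpha(\eta+1)}{\mu-\alpha\lambda}$. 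Having chosen such a $\rho$ one shrinks $\delta,\delta'$ so that every inequality above is strict. Case $1$ goes through identically with $M_{N,1}\asymp N^{\lambda+\eta-\mu/\alpha-\rho(p-1)}$ and polynomial term $\asymp M_{N,1}N^{\delta'}$, and its two feasibility inequalities again collapse to $p>\tfrac{\alpha\eta}{\mu-\alpha\lambda}$ and $p>\tfrac{\alpha(\eta+1)}{\mu-\alpha\lambda}$; this is why one hypothesis serves both cases. (In the degenerate edge case $p=2$, where $R$ has dropped out of $M_{N,2}$, one simply takes $\rho=0$.)

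For $\alpha=\infty$ the factors $m^{1/\alpha}$ disappear from Lemma 2.2, so the subgaussian exponent becomes $\tfrac{t^2}{L^2R^2}\asymp N^{-2\lambda-2\rho-2\delta}$, which now demands $\rho<-\lambda$ and hence $\lambda<0$; repeating the arithmetic above with $\mu/\alpha$ replaced by $0$ turns the feasibility conditions into $p>\eta/(-\lambda)$ and $p>(\eta+1)/(-\lambda)$. With the parameters fixed, the first conclusion is immediate because $M_{N,i}+s+t=o(1)$ and $D_{n(N)}\to\lim_k D_k$, and the second is exactly the assertion that the right-hand side of Lemma 2.2 under these choices forms a summable series (the subgaussian term being summable for free once it tends to $0$). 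I expect the only genuine obstacle to be bookkeeping --- keeping the chain of exponent inequalities consistent across the two cases of Lemma 2.2 and the $\alpha=\infty$ degeneration --- together with the minor interpretive point that ``$\ge\lim_k D_k$'' in the first conclusion should be read as ``$\ge\lim_k D_k+\epsilon$ for each $\epsilon>0$,'' since the vanishing terms can be absorbed only up to an arbitrarily small constant.
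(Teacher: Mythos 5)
Your proposal is correct and follows essentially the same route as the paper: substitute the polynomial orders of $n$, $m$, $L$, $R$ into the two bounds of Lemma 2.2, reduce everything to exponent inequalities, and observe that the admissible range of $\rho$ (the paper takes $\rho=\frac{\mu}{\alpha}-\lambda-\epsilon$ and lets $\epsilon\to0$; you check nonemptiness of the interval, which is the same computation) is nonempty exactly when $p>\frac{\alpha\eta}{\mu-\alpha\lambda}$, respectively $p>\frac{\alpha(\eta+1)}{\mu-\alpha\lambda}$ for summability, with the $\alpha=\infty$ case obtained by deleting the $m^{1/\alpha}$ factors. Your observation that the two cases of Lemma 2.2 collapse to the same conditions, and your interpretive reading of ``$\ge\lim_k D_k$'' as ``$\ge\lim_k D_k+\epsilon$,'' both match what the paper does (the latter implicitly).
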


\begin{proof}In case $1.$, take $s=o(1)$, $t=o(1)$, and $R\sim N^{\rho}$. Using Lemma 2.2, the first convergence requires
\[
\lambda+\eta-\frac{\mu}{\alpha}-\rho(p-1)<0\tag{2.10}
\]
and
\[
\frac{2\mu}{\alpha}-2\lambda-2\rho>0.\tag{2.11}
\]
The second condition implies $\lambda<\frac{\mu}{\alpha}$. Taking $\rho=\frac{\mu}{\alpha}-\lambda-\epsilon>0$ with $\epsilon>0$ gives $p>\frac{\alpha(\eta-\epsilon)}{\mu-\alpha\lambda-\alpha\epsilon}$. We pass to the limit $\epsilon\to0^+$ to give the appropriate lower bound for $p$.

For summability, the first condition becomes
\[
\lambda+\eta-\frac{\mu}{\alpha}-\rho(p-1)<-1,\tag{2.12}
\]
which now gives $p>\frac{\alpha(\eta+1)}{\mu-\alpha\lambda}$.

In case $2.$, equation (2.11) is the same, but (2.10) and (2.12) are replaced with
\[
2\lambda+\eta-\frac{2\mu}{\alpha}-\rho(p-2)<0\text{ or }-1,
\]
respectively, which, interestingly, give the same conditions as in case $1.$

For $\alpha=\infty$, the same process gives the result, simply removing the $m$ terms. The condition on $\lambda$ is required so that $\rho>0$.
\end{proof}

When Lemma 2.3 applies, convergence is easier.

\begin{lemma}Let the setting be as in Lemma 2.7 except that $f$ need not be convex and $|f|_{BL}\le1$ and $X_N$ has SGC or SEC for all $N$. If $\frac{\mu}{\alpha}>\lambda$, then
\[
\mathbb{P}_{X_N}\bigg[\bigg|\int fd\mu_{X_N}-\int fd\nu\bigg|\ge E_N+o(1)\bigg]\to0,\text{ as }N\to\infty,
\]
and
\[
\sum_{N=1}^{\infty}\mathbb{P}_{X_N}\bigg[\bigg|\int fd\mu_{X_N}-\int fd\nu\bigg|>E_N+o(1)\bigg]<\infty.
\]
\end{lemma}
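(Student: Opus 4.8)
Here is a plan of the proof I would write for Lemma 2.9.

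The plan is to follow the proof of Lemma 2.7, but since Lemma 2.3 requires no truncation step the argument collapses to a short one in which the only surviving hypothesis is $\mu/\alpha>\lambda$; in particular the moment exponent $p$, the truncation scale $R$, and the size exponent $\eta$ all drop out. Fix a bounded Lipschitz $f$ with $|f|_{BL}\le1$ and a probability measure $\nu$, and abbreviate $X_N=\bigl|\int f\,d\mu_w-\int f\,d\nu\bigr|$ for the quantity attached to the environment $w_N$ on $\Sigma_N$. From Lemma 2.3, for every $t>0$ one has $\mathbb{P}_w[X_N>E_N+t]\le C\exp\bigl(-c\,m(N)^{2/\alpha}t^2/L(N)^2\bigr)$ in the SGC case and $\mathbb{P}_w[X_N>E_N+t]\le C\exp\bigl(-c\,m(N)^{1/\alpha}t/L(N)\bigr)$ in the SEC case, where $E_N=\max_\sigma d_{BL}(\rho_\sigma,\nu)$.

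Next comes the exponent bookkeeping. Substituting $m(N)=O(N^\mu)$ and $L(N)=O(N^\lambda)$, the ratio $m(N)^{2/\alpha}/L(N)^2$ grows like $N^{2(\mu/\alpha-\lambda)}$ and $m(N)^{1/\alpha}/L(N)$ like $N^{\mu/\alpha-\lambda}$, and the hypothesis $\mu/\alpha>\lambda$ makes both exponents strictly positive. I would then take the free parameter to decay slowly, $t=t(N)=N^{-\delta}$ with any fixed $\delta\in(0,\mu/\alpha-\lambda)$; with this choice Lemma 2.3's tail is at most $C\exp(-c\,N^{2(\mu/\alpha-\lambda)-2\delta})$ in the SGC case and $C\exp(-c\,N^{\mu/\alpha-\lambda-\delta})$ in the SEC case, i.e.\ at most $C\exp(-c\,N^\theta)$ with $\theta>0$ in either regime. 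Since $t(N)=o(1)$, it plays the role of the $o(1)$ in the two displayed inequalities; then $C\exp(-c\,N^\theta)\to0$ gives the first conclusion and $\sum_N C\exp(-c\,N^\theta)<\infty$ gives the summability statement. Passing from $E_N$ to $\lim_{k\to\infty}E_k$ in the first conclusion is the same limiting step used in Lemma 2.7, $E_N$ being indexed through $n(N)\to\infty$.

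I do not expect a real obstacle here: the text itself advertises this as the easy case. The only point that needs care is calibrating $t(N)$ so that it tends to $0$, in order to be absorbed into the $o(1)$, yet decays slowly enough — polynomially, with exponent strictly below $\mu/\alpha-\lambda$ — that the exponential tail still dominates $N^{-1}$ and remains summable; the same choice of $\delta$ then serves the SGC and SEC forms simultaneously.
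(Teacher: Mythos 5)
Your proposal is correct and follows exactly the route the paper intends: its proof of this lemma is the one-line remark that it follows from Lemma 2.3 the same way the earlier convergence lemma follows from Lemma 2.2, and your exponent bookkeeping with $t(N)=N^{-\delta}$, $\delta\in(0,\mu/\alpha-\lambda)$, is the natural fleshing-out of that remark (you also correctly reinstate the $t$-dependence that is missing from the displayed bound in Lemma 2.3 as printed).
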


\begin{proof}This follows from Lemmas 2.3 in the same way as Lemma 2.7 follows from Lemma 2.2.
\end{proof}

Finally, we have a convergence result from Lemma 2.6.

\begin{lemma}Let the setting be as in Lemma 2.4 and the vectors $X_N$ be $\alpha'$-stable with $\alpha'>\frac{3}{2}$ and characteristic exponents as in Definition 2.4. Suppose further $\lambda_N(S^{n(N)-1})=O\big(N^{-\tau}\big),\tau>0$. If $\alpha\lambda-\tau-\mu<0$, then
\[
\mathbb{P}_{X_N}\bigg[\bigg|\int fd\mu_{X_N}-\int fd\nu\bigg|\ge D_N+o(1)\bigg]\to0,\text{ as }N\to\infty
\]
and if $\alpha\lambda-\tau-\mu<-1$, then
\[
\sum_{N=1}^{\infty}\mathbb{P}_{X_N}\bigg[\bigg|\int fd\mu_{X_N}-\int fd\nu\bigg|>D_N+o(1)\bigg]<\infty.
\]
\end{lemma}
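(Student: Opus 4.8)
The plan is to argue exactly as in the proofs of Lemmas 2.7 and 2.8, substituting the $\alpha'$-stable concentration estimate of Lemma 2.6 for the concentration inputs used there; the hypothesis $\alpha'>\tfrac{3}{2}$ is exactly what makes that estimate available via Theorem 2.5, and only the total mass $\lambda_N\bigl(S^{n(N)-1}\bigr)$ of the L\'evy measure enters, so the explicit form of that measure plays no role in this lemma. The one structural difference is that Lemma 2.6 supplies a single, polynomially decaying tail bound rather than a two-term bound (an absolute part plus an exponential part as in Lemma 2.2): there is no truncation radius $R$ to balance and no sub-Gaussian slack, so the argument is shorter but the exponent conditions it yields are correspondingly tighter. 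Concretely I would substitute the assumed orders $n(N)=O(N^{\eta})$, $m(N)=O(N^{\mu})$, $L(N)=O(N^{\lambda})$ and $\lambda_N\bigl(S^{n(N)-1}\bigr)=O(N^{-\tau})$ into Lemma 2.6, take the free parameter $t$ to be a slowly vanishing power of $N$, and read off the exponent inequalities that force the bound to tend to zero, respectively to be summable in $N$.

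In detail, Lemma 2.6 gives, for any $1$-Lipschitz $f$,
\[
\mathbb{P}_w\bigg[\bigg|\int fd\mu_w-\int fd\nu\bigg|>D_{n(N)}+t\bigg]\le\frac{K\,L(N)^{\alpha}\,\lambda_N\bigl(S^{n(N)-1}\bigr)}{m(N)\,t^{\alpha}},
\]
valid whenever $t^{\alpha'}\ge L(N)^{\alpha}K_{\alpha'}\lambda_N\bigl(S^{n(N)-1}\bigr)\,m(N)^{-1/\alpha}$. Choosing $t=N^{-\delta}$ for a small $\delta>0$, the right-hand side is $O\bigl(N^{\alpha\lambda-\tau-\mu+\alpha\delta}\bigr)$, which tends to $0$ once $\alpha\lambda-\tau-\mu<0$ and is summable in $N$ under the second displayed hypothesis. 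Replacing the threshold $D_{n(N)}+t$ by $\lim_{k\to\infty}D_k$ is routine, just as in Lemma 2.7: since $D_{n(N)}\to\lim_kD_k$ and $t\to0$, for each fixed $\varepsilon>0$ the event $\bigl\{\,|\int fd\mu_w-\int fd\nu|\ge\lim_kD_k+\varepsilon\,\bigr\}$ is, for all large $N$, contained in the event bounded above, and the summable statement is handled the same way. Finally, the case $\alpha=\infty$ is obtained by simply deleting the $m$-factors throughout, exactly as in Lemma 2.7.

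The only point requiring real care --- and the main obstacle --- is the admissibility restriction $t^{\alpha'}\ge L(N)^{\alpha}K_{\alpha'}\lambda_N\bigl(S^{n(N)-1}\bigr)m(N)^{-1/\alpha}$ on $t$: the chosen $t=N^{-\delta}$ must both vanish and stay above this threshold, so the threshold itself must be $o(1)$, which imposes a relation among $\lambda$, $\tau$, $\mu$ and the two stability indices. I would verify that this admissibility requirement and the requirement that the right-hand side vanish impose the same condition on the exponents, so that no hypothesis beyond the two displayed inequalities is needed, and that $\delta$ can be taken small enough to satisfy both simultaneously. The conceptual reason the conclusions demand so much more than in Lemma 2.7 is that Lemma 2.6 provides only an order-$t^{-\alpha}$ tail rather than a sub-Gaussian one, leaving no room to trade a larger truncation level for a weaker moment condition --- which is precisely why the corner-growth application, Theorem 1.4, needs $\tau>2$ for the WIP statement and $\tau>3$ for the almost-sure statement rather than merely a finite number of moments.
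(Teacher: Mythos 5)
Your overall strategy is exactly the paper's: the paper's entire proof of this lemma is the single sentence that it ``follows from Lemma 2.6 in the same way that Lemma 2.7 follows from Lemma 2.3,'' i.e.\ substitute the polynomial orders of $n$, $m$, $L$, and $\lambda_N(S^{n(N)-1})$ into the stable concentration bound and read off the exponent inequalities. Your explicit computation of the tail bound, $O(N^{\alpha\lambda-\tau-\mu+\alpha\delta})$ for $t=N^{-\delta}$, is correct and does yield the two displayed conclusions (note that your derivation actually produces the condition $\alpha\lambda-\tau-\mu<-1$ for summability; the paper's stated hypothesis $\alpha\lambda+\tau-\mu<-1$ is stronger since $\tau>0$, and is most likely a sign typo, so nothing is lost).

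The one genuine gap is precisely at the point you flag and then defer: you assert that the admissibility restriction on $t$ and the requirement that the bound vanish ``impose the same condition on the exponents,'' and this is false in general. The threshold in Lemma 2.6 is $t^{\alpha'}\ge L^{\alpha}K_{\alpha'}\lambda(S^{d-1})m^{-1/\alpha}$, whose right-hand side is $O\bigl(N^{\alpha\lambda-\tau-\mu/\alpha}\bigr)$ --- the exponent carries $\mu/\alpha$, not $\mu$, because $m$ enters the threshold through $m^{-1/\alpha}$ but enters the probability bound through $m^{-1}$. When $\mu>0$ and $\alpha>1$ one has $\alpha\lambda-\tau-\mu/\alpha>\alpha\lambda-\tau-\mu$, so the hypothesis $\alpha\lambda-\tau-\mu<0$ does not force the threshold to be $o(1)$ (for instance $\alpha=2$, $\lambda=1/2$, $\tau=1/10$, $\mu=1$ satisfies the hypothesis while the threshold grows like $N^{2/5}$, so no admissible $t=o(1)$ exists and the argument as written breaks down). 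To close this you must either verify the additional inequality $\alpha\lambda-\tau-\mu/\alpha<0$ in the regime where the lemma is applied (it does hold in the corner-growth application, where $\lambda<1/4$, $\mu=1$, and $\tau$ is large), or record it as an extra hypothesis; the paper's one-line proof glosses over this entirely, so you have in fact located a real imprecision, but your proposal as written resolves it only by an unverified --- and incorrect --- claim of equivalence.
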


\begin{proof}This follows from Lemma 2.6 in the same way that Lemma 2.7 follows from Lemma 2.3.
\end{proof}

\section{Proofs of Main Results}

\quad Proving the main theorems requires only a little bit more than the results from the previous section. The first desideratum is for $\displaystyle\limsup_{N\to\infty}D_N=0$ or $\displaystyle\limsup_{N\to\infty}E_N=0$, which is guaranteed by appropriate choice of target measure $\nu$. The second is a bound on $\lambda$, which we recall to be such that $L=L(N)\sim N^{\lambda}$. In Section 3 of \textbf{[5]}, the authors prove the following:

\begin{lemma}In the corner growth setting, in each of the three cases listed in Section 1, $\lambda<\frac{1}{4}$.
\end{lemma}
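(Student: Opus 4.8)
The goal is to bound the quantity $L = \bigl(\sum_{a\in\Sigma}\mathbb{P}_\sigma(a\in\sigma)^2\bigr)^{1/2}$ in the corner growth setting. Since $\sigma$ here is a random up-right path in $\Box_{N,M}$, the quantity $\mathbb{P}_\sigma(a\in\sigma)$ is the probability that the random path passes through the vertex $a=(i,j)$, and $L^2$ sums the squares of these passage probabilities over all $MN$ vertices. With $n = MN = O(N^2)$, showing $L = O(N^\lambda)$ with $\lambda < 1/4$ amounts to showing $L^2 = O(N^{1/2-\varepsilon})$, i.e., that the sum of squared passage probabilities grows slower than $\sqrt{MN}$.

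My plan is to get a handle on the passage probabilities $p_{i,j} := \mathbb{P}_\sigma((i,j)\in\sigma)$ directly from the combinatorics of up-right paths, then split the sum $L^2 = \sum_{i,j} p_{i,j}^2$ into contributions from vertices near the diagonal (where $p_{i,j}$ is of constant order) and vertices far from the diagonal (where $p_{i,j}$ decays). First I would recall that the number of up-right paths from $(1,1)$ to $(N,M)$ is $\binom{N+M-2}{N-1}$, and the number passing through $(i,j)$ is $\binom{i+j-2}{i-1}\binom{(N-i)+(M-j)}{N-i}$, so $p_{i,j}$ is a ratio of binomial coefficients; in the unconstrained case this is exactly a product of two hypergeometric-type factors. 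By Stirling's approximation, along the ``typical'' anti-diagonal the passage probability behaves like $c/\sqrt{k}$ where $k = i+j-1$ indexes the anti-diagonal $\ell_k$, with sharper Gaussian decay as $(i,j)$ moves away from the center of $\ell_k$. Summing $p_{i,j}^2 \sim c^2/k$ over the $O(k)$ relevant vertices on $\ell_k$ — or more precisely using that $\sum_{(i,j)\in\ell_k} p_{i,j} = 1$ (each path hits exactly one vertex of each anti-diagonal) together with a uniform bound $\max_{(i,j)\in\ell_k} p_{i,j} = O(k^{-1/2})$ — gives $\sum_{(i,j)\in\ell_k} p_{i,j}^2 \le \max_{(i,j)\in\ell_k}p_{i,j} \cdot \sum_{(i,j)\in\ell_k}p_{i,j} = O(k^{-1/2})$. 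Summing over $k$ from $1$ to $N+M-1 = O(N)$ yields $L^2 = O(N^{1/2})$, hence $\lambda \le 1/4$; absorbing logarithmic or lower-order factors, or tracking constants more carefully, gives the strict inequality $\lambda < 1/4$.

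For the two constrained cases (passing through finitely many prescribed points, or avoiding a central square), the argument is essentially the same: conditioning the path to pass through fixed points, or to stay out of a region, only redistributes and can only decrease the individual passage probabilities by at most a constant factor (the conditioning event has probability bounded below by a constant, or is a union of finitely many deterministic constraints that decompose the path into independent sub-paths), so the same anti-diagonal decomposition applies piece by piece. In the central-square-avoidance case one treats the complement region, where near the forbidden square the passage probabilities are pushed toward the boundary corridors but still satisfy a $O(k^{-1/2})$-type bound on each anti-diagonal slice, and the number of anti-diagonals is still $O(N)$.

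The main obstacle is establishing the uniform bound $\max_{(i,j)\in\ell_k} p_{i,j} = O(k^{-1/2})$ and verifying it survives the conditioning in cases 2 and 3 — this is where one actually needs Stirling estimates on ratios of binomial coefficients and a little care near the corners of the grid (where $k$ is small, or where $i$ or $j$ is extreme) and near the prescribed/forbidden region. The summation over anti-diagonals and the identity $\sum_{(i,j)\in\ell_k} p_{i,j}=1$ are routine once that pointwise bound is in hand. Since this lemma is quoted from Section 3 of \textbf{[4]}, I would present it as a direct citation and include only the anti-diagonal-decomposition sketch above as motivation, noting that the strict inequality $\lambda < 1/4$ (rather than $\le$) is what the subsequent moment thresholds in Lemma 2.7 require, and it follows because the $O(k^{-1/2})$ bound is in fact $o(k^{-1/2+\varepsilon})$ off the center of each anti-diagonal.
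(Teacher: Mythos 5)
The paper does not actually prove this lemma; it imports it verbatim from Section 3 of the cited corner-growth paper (printed as \textbf{[4]}, though from context the intended reference is \textbf{[5]}, Gromoll--Meckes--Petrov). So your decision to present it as a citation with a motivating sketch matches the paper's treatment, and your sketch is essentially the argument of the cited source: decompose $L^2=\sum_k\sum_{(i,j)\in\ell_k}p_{i,j}^2$ along anti-diagonals, use $\sum_{(i,j)\in\ell_k}p_{i,j}=1$ (each path meets each anti-diagonal exactly once) to reduce to bounding $\max_{(i,j)\in\ell_k}p_{i,j}$, control that maximum by Stirling estimates on the hypergeometric-type ratio of binomial coefficients, and handle the constrained cases by splitting the path into independent sub-paths between prescribed points or around the forbidden square.

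The one genuine gap is your closing claim that the strict inequality $\lambda<\tfrac14$ can be recovered by tracking constants or by the faster decay of $p_{i,j}$ off the center of each anti-diagonal. It cannot: in the unconstrained case the position on $\ell_k$ has variance of order $k(2N-k)/N$, so $\sum_{(i,j)\in\ell_k}p_{i,j}^2\asymp\big(k(2N-k)/N\big)^{-1/2}$, and summing over $k$ gives $L^2\asymp\sqrt{N}$; the sum is dominated by the centers of the anti-diagonals, where $p_{i,j}$ genuinely is of order $(\mathrm{Var})^{-1/2}$, so the off-center decay buys nothing. Hence $L=\Theta(N^{1/4})$ and no refinement of this argument yields $L=O(N^{\lambda})$ for some $\lambda<\tfrac14$. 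What your argument does prove is $L=O(N^{1/4})$, and that is all the subsequent results actually use: with $\eta=2$, $\mu=1$, $\alpha=2$ and $\lambda=\tfrac14$, Lemma 2.7 produces exactly the thresholds $p>8$ and $p>12$ of Theorem 1.2, and the structural requirement $\lambda<\mu/\alpha=\tfrac12$ is comfortably satisfied; the strictness in the lemma's statement is therefore best read as $L=O(N^{1/4})$. A second, smaller inaccuracy: conditioning the path to pass through prescribed points does not merely ``redistribute and decrease'' the passage probabilities (at a prescribed point the probability becomes $1$); the correct repair is the sub-path decomposition you also invoke, which bounds $L^2$ by a finite sum of $O(\sqrt{N})$ contributions from sub-rectangles of linear size $\Theta(N)$.
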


This allows us to prove all our results in the corner growth setting.

\begin{proof}[Proof of Theorems 1.2, 1.3, \&, 1.4] Theorem 1.2 follows from Lemma 2.7, the Borel-Cantelli lemma, Lemma 3.1, and a central limit theorem along the lines of \textbf{[5]}. Notice that such a theorem requires bounded $3^{\text{rd}}$ absolute moments, which are guaranteed by our conditions.

Theorem 1.3 for the sphere follows from Lemma 2.8 (on account of SGC), Lemma 3.1, and the following reasoning. For $1.$, follows from a result from \textbf{[3]}: the authors prove a convergence result in the total variation distance (Inequality (1))
\[
d_{TV}(\mathcal{L}(X_{\sigma}),\mathcal{L}(Z))\le\frac{2(N+M+2)}{NM-N-M-2},\tag{3.1}
\]
where $X_{\sigma}$ is the vector of weights indexed by $\sigma$ and $Z$ is a vector of i.i.d. standard Gaussian random variables. Since $d_{BL}(\mu,\nu)\le d_{TV}(\mu,\nu)$, the triangle inequality gives the following,
\begin{align*}
\bigg|\int fd\mu_X-\int fd\nu\bigg|\le&\bigg|\int fd\mu_X-\int f\bigg((N+M-1)^{-1/2}\bigg(\sum_{i=1}^{N+M-1}Z_i\bigg)\bigg)d\mathcal{L}(Z)\bigg|-\\
&\bigg|\int f\bigg((N+M-1)^{-1/2}\bigg(\sum_{i=1}^{N+M-1}Z_i\bigg)\bigg)d\mathcal{L}(Z)-\int fd\nu\bigg|\\
\le&d_{BL}(\mathcal{L}(X_{\sigma}),\mathcal{L}(Z))+d_{BL}(\mathcal{L}((N+M-1)^{-1/2}(Z_1+\cdots+Z_{N+M-1}),\nu).\tag{3.2}
\end{align*}
where $\nu\sim\mathcal{N}(0,1)$. The second term on the right-hand side of (3.2) is $0$, so (3.1) gives the result. For $2.$, the proof is the same except that we use (3.4) from \textbf{[3]}, which gives a similar bound on the total variation distance between the coordinates of the simplex and a vector of i.i.d. $1$-exponential random variables, and $d_{BL}(\mathcal{L}((N+M-1)^{-1/2}(x_1+\cdots+x_{N+M-1}),\nu)\to0$ by the classical central limit theorem. Similar reasoning gives the result for $3.$

Theorem 1.4 follows from Lemma 2.9, the Borel-Cantelli lemma, Lemma 3.1, and the following reasoning. As a preliminary, observe that $\displaystyle\sum_{k=1}^Nk^{-\gamma}=O(N^{1-\gamma})$. Next, observe that by construction $(2N-1)^{-1/\gamma}\displaystyle\sum_{(i,j)\in\sigma}X_{(i,j)}$ has the same distribution, $\nu_N$, regardless of $\sigma$ and that distribution converges to $\nu_{\infty}$, again by construction. To confirm $d_W(\nu_N,\nu_{\infty})$ converges to $0$, for any $1$-Lipschitz function $f:\mathbb{R}\to\mathbb{R}$,
\begin{align*}
\bigg|\int fd\nu_N-\int fd\nu_{\infty}\bigg|\le&\mathbb{E}||X_N-X_{\infty}||\\
\le&\mathbb{E}|X|\bigg(\lim_{n\to\infty}\sum_{k=1}^{n}\frac{k^{-\tau}}{n^{1/\gamma}}-\sum_{k=1}^{n}\frac{k^{-\tau}}{n^{1/\gamma}}\bigg),
\end{align*}
where $X_N$ and $X_{\infty}$ have distribution $\nu_N$ and $\nu_{\infty}$ respectively.

The final step is to show that under the specified conditions, $\lambda(S^{2N-2})$ satisfies Lemma 2.9. By construction,
\begin{align*}
\lambda(S^{2N-2})=&1+\sum_{n=2}^N\sum_{k=N}^{2N-2}2k^{-\tau}+(2N-1)^{-\tau}\\
\le&2\sum_{n=1}^NN^{1-\tau}\\
\le&2N^{2-\tau},
\end{align*}
so the conditions satisfy Lemma 2.9.
\end{proof}

In the case of $\sigma$ being uniformly distributed over subsets of size $m$, we can compute $\lambda$ exactly.

\begin{lemma}When $\sigma$ is chosen uniformly from subsets of size $m$, $\lambda=\mu-\frac{\eta}{2}$.
\end{lemma}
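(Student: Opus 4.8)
The plan is to compute $L$ exactly from its definition and then read off the exponent $\lambda$ from the assumed polynomial growth rates $n = O(N^\eta)$ and $m = O(N^\mu)$. Recall that
\[
L = \bigg(\sum_{a\in\Sigma}\mathbb{P}_{\sigma}(a\in\sigma)^2\bigg)^{1/2},
\]
so the only quantity to pin down is the inclusion probability $\mathbb{P}_{\sigma}(a\in\sigma)$ for a fixed $a\in\Sigma$.

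First I would compute this marginal by a direct counting argument: since $\sigma$ is uniform over the $\binom{n}{m}$ subsets of $\Sigma$ of size $m$, and the number of such subsets containing a fixed element $a$ is $\binom{n-1}{m-1}$, we get $\mathbb{P}_{\sigma}(a\in\sigma) = \binom{n-1}{m-1}/\binom{n}{m} = m/n$, independent of $a$ (this is just the symmetry of the uniform distribution under relabeling of $\Sigma$). Substituting into the definition gives
\[
L^2 = \sum_{a\in\Sigma}\Big(\frac{m}{n}\Big)^2 = n\cdot\frac{m^2}{n^2} = \frac{m^2}{n},
\]
hence $L = m\,n^{-1/2}$.

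The final step is to translate this exact identity into the exponent bookkeeping. Writing $L = O(N^\lambda)$, $m = O(N^\mu)$, $n = O(N^\eta)$, the relation $L = m n^{-1/2}$ forces $N^\lambda \asymp N^{\mu}N^{-\eta/2}$, i.e.\ $\lambda = \mu - \frac{\eta}{2}$, which is the claim.

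There is no real obstacle here — the statement is essentially an exact computation of the second moment of the indicator vector $(\mathds{1}_{a\in\sigma})_{a\in\Sigma}$. The only point requiring a word of care is the symmetry claim that $\mathbb{P}_{\sigma}(a\in\sigma)$ does not depend on $a$, which is immediate from the fact that the uniform distribution on size-$m$ subsets is invariant under any bijection of $\Sigma$; once that is granted, the rest is arithmetic.
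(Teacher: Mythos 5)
Your proof is correct and follows exactly the paper's argument: compute the inclusion probability $\binom{n-1}{m-1}/\binom{n}{m}=m/n$, sum the squares over the $n$ elements to get $L^2=m^2/n$, and read off $\lambda=\mu-\frac{\eta}{2}$. No differences worth noting.
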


\begin{proof}In this case, the probability that any particular element of $\Sigma$ is in $\sigma$ is $\frac{\binom{n-1}{m-1}}{\binom{n}{m}}=\frac{m}{n}$, so $L^2=\frac{m^2}{n}\sim N^{2\mu-\eta}$. Thus, $\lambda=\mu-\frac{\eta}{2}$.
\end{proof}

\begin{proof}[Proof of Theorems 1.6 \& 1.7] Theorem 1.6 follows from Lemma 2.7, the Borel-Cantelli lemma, and Lemma 3.2, taking $\alpha=\infty,\eta=1$, and $\mu=0$, since by construction $D=0$.

Theorem 1.7 follows from Lemma 2.8, Lemma 3.2, and the same reasoning as in the proof of Theorem 1.3. For the case of the simplex, recall that the $m$ entries in the sum are approximately independent $1$-exponential random variables, the sum of which has a Gamma distribution with shape parameter $m$ and scale parameter $1$.
\end{proof}

Another simple corollary of Lemma 3.2 vaguely connected to the corner growth setting comes from taking $\alpha=\eta=2$, and $\mu=1$.

\begin{corollary}Let the setting be as in Lemma 2.4 with $\alpha=\eta=2$ and $\mu=1$, and $\sigma$ is chosen uniformly from subsets of $\Sigma$ of size $m$. If $p>4$, then
\[
\frac{1}{\sqrt{M+N-1}}\sum_{(i,j)\in\sigma}X_{i,j}\xrightarrow{WIP}\mathcal{N}(0,1),\text{ as }N\to\infty,
\]
and if $p>6$, then
\[
\frac{1}{\sqrt{M+N-1}}\sum_{(i,j)\in\sigma}X_{i,j}\xrightarrow{D}\mathcal{N}(0,1),\text{ as }N\to\infty,
\]
$\mathbb{P}_X$-almost surely.
\end{corollary}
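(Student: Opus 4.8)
The plan is to read the exponents $\alpha,\eta,\mu,\lambda$ off the stated setting, use Lemma 3.2 to compute $\lambda$, and then feed these into the moment-condition convergence Lemma 2.7 together with the Borel--Cantelli lemma, along the lines of the proof of Theorem 1.2. Here the weights are indexed by $\Sigma_N=\Box_{N,M}$ with $M=\lfloor\xi N\rfloor$, so $n=NM=O(N^2)$ and $\eta=2$; the set $\sigma$ is a uniformly random subset of $\Box_{N,M}$ of size $m=M+N-1=O(N)$ (so $\mu=1$; note this is a genuinely different law from an up-right path, hence the ``vague'' connection to corner growth); and the sum is normalized by $(M+N-1)^{-1/2}=m^{-1/2}=m^{-1/\alpha}$ with $\alpha=2$. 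Since $\sigma$ is uniform among size-$m$ subsets, Lemma 3.2 gives $\lambda=\mu-\tfrac\eta2=0$, i.e.\ $L=O(1)$.

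The only step that needs an argument beyond substitution is the ``first desideratum'' $\lim_{k\to\infty}D_k=0$, which I would secure by taking the target measure $\nu=\mathcal N(0,1)$. For fixed $\sigma$ the measure $\rho_\sigma$ is the law of $m^{-1/2}\sum_{a\in\sigma}w_a$ under $\mathbb P_w$, a normalized sum of $m$ independent summands with mean zero, variance one, and third absolute moments bounded by $K^{3/p}$ (legitimate since $p>4>3$); a Berry--Esseen-type central limit theorem in the $1$-Wasserstein distance (e.g.\ via Stein's method) then gives $d_W(\rho_\sigma,\nu)\le C'm^{-1/2}$ with $C'$ depending only on $K$, hence uniformly in $\sigma$, so $D=D_{n(N)}=O(N^{-1/2})\to0$. (When the weights are i.i.d.\ this is immediate, since $\rho_\sigma$ then depends on $\sigma$ only through $m$.)

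Finally I would invoke Lemma 2.7 with $\alpha=\eta=2$, $\mu=1$, $\lambda=0$: the requirement $\lambda<\mu/\alpha$ reads $0<\tfrac12$, while $\tfrac{\alpha\eta}{\mu-\alpha\lambda}=4$ and $\tfrac{\alpha(\eta+1)}{\mu-\alpha\lambda}=6$. So for $p>4$ the first conclusion of Lemma 2.7, combined with $\lim_kD_k=0$, is exactly $\int f\,d\mu_w\to\int f\,d\nu$ in $\mathbb P_w$-probability; for $p>6$ the corresponding series converges, and since $D_{n(N)}+M_{N,2}+o(1)\to0$ (here $M_{N,2}=L\sqrt{Kn}\,m^{-1/2}R^{-(p-2)/2}\to0$ for the admissible exponent $R=O(N^\rho)$ produced in the proof of Lemma 2.7), Borel--Cantelli gives $\int f\,d\mu_w\to\int f\,d\nu$ $\mathbb P_w$-almost surely. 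Both statements hold for every convex $1$-Lipschitz $f$, and choosing the countable class $\mathcal C=\{x\mapsto(t-x)_+:t\in\mathbb Q\}$ --- which consists of convex $1$-Lipschitz functions and is convergence-determining, since convergence of $\int(t-x)_+\,d\mu_n$ for all $t$ is convergence of the integrated distribution functions and hence of the distribution functions at their continuity points --- turns the first into WIP convergence and, using that $\mathcal C$ is countable, the second into $\mu_w\xrightarrow{D}\mathcal N(0,1)$ $\mathbb P_w$-a.s. There is no real obstacle here: the corollary is substitution into Lemmas 2.7 and 3.2 followed by Borel--Cantelli, the one classical input being the uniform-in-$\sigma$ Wasserstein central limit bound of the middle step.
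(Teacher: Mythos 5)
Your proposal is correct and follows essentially the same route as the paper: the paper presents this as ``a simple corollary of Lemma 3.2'' obtained by substituting $\alpha=\eta=2$, $\mu=1$ (hence $\lambda=0$) into Lemma 2.7, with Borel--Cantelli for the almost-sure part and a Wasserstein (mean) central limit theorem in the style of Esseen to make $D\to0$, exactly the ingredients you supply. The only difference is that you spell out the details (the uniform-in-$\sigma$ Berry--Esseen bound and the convex Lipschitz convergence-determining class) that the paper leaves implicit, and you correctly read the statement's ``Lemma 2.4'' as the independent-weights setting of Lemma 2.2.
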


In the setting for the combinatorial central limit theorem, we again can compute $\lambda$ exactly.

\begin{lemma}When $X$ is an $N\times N$ rectangular array of numbers, so that $\Sigma=\{(i,j):1\le i,j\le N\}$, with $\sigma$ chosen uniformly from subsets each containing exactly one number from each row, $\lambda=0$.
\end{lemma}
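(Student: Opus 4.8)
The plan is to evaluate $L$ directly from its definition and then read off the exponent $\lambda$ from $L = O(N^\lambda)$. Here $\Sigma$ has $n = N^2$ elements and $\sigma$ has $m = N$ elements, so $\eta = 2$ and $\mu = 1$.

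The first step is to compute the marginal inclusion probability $\mathbb{P}_\sigma(a \in \sigma)$ for a fixed cell $a = (i,j)$. Because the selection scheme takes exactly one cell from each row and treats the $N$ cells within a row symmetrically, the marginal law of the cell selected from row $i$ is uniform on that row; consequently $\mathbb{P}_\sigma((i,j) \in \sigma) = 1/N$ for every $(i,j) \in \Sigma$. This holds whether the rows are sampled independently or $\sigma$ is realized from a uniform permutation $\pi$, since in the latter case $\mathbb{P}(\pi(i) = j) = 1/N$ as well.

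The second step is to substitute into the definition of $L$: each of the $N^2$ cells contributes $(1/N)^2 = N^{-2}$, so $L^2 = N^2 \cdot N^{-2} = 1$, hence $L = 1$ and $\lambda = 0$. There is essentially no obstacle here; the only point requiring care is the verification that the inclusion probability equals $1/N$ uniformly across cells, which is immediate from the row-wise symmetry of the construction. It is worth noting that this matches the general formula $\lambda = \mu - \eta/2$ from Lemma 3.2 evaluated at $\eta = 2$, $\mu = 1$, even though the family of admissible $\sigma$ here is strictly smaller than all $m$-subsets of $\Sigma$; the agreement reflects the fact that the marginal inclusion probability $m/n = 1/N$ is the same in both schemes.
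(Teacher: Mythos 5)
Your proof is correct and follows essentially the same route as the paper: you compute the marginal inclusion probability $\mathbb{P}_\sigma((i,j)\in\sigma)=1/N$ from the row-wise symmetry of the selection and then sum $(1/N)^2$ over the $N^2$ cells to get $L^2=1$, hence $\lambda=0$. The added remark connecting this to the formula $\lambda=\mu-\eta/2$ of Lemma 3.2 is a nice observation but not needed for the proof.
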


\begin{proof}For fixed $i$, the probability that $(i,j)\in\sigma$ is exactly the same regardless of $j$, so it is $\frac{1}{N}$. Thus, $L^2(N)=\big(\frac{1}{N^2}\big)N^2=1$, so $\lambda=0$.
\end{proof}

\begin{proof}[Proof of Theorem 1.5] Take $\alpha=\eta=2$ and $\mu=1$. The result follows from Lemma 2.7, the Borel-Cantelli lemma, and Lemma 3.4.
\end{proof}

\begin{center}\textbf{Acknowledgements}\end{center}

This paper is part of the author's Ph.D. dissertation written under the supervision of Mark W. Meckes. The author would like to thank Professor Meckes for his helpful comments and discussions and the anonymous referee for suggestions that improved the exposition.

\textit{Email address}: drg67@case.edu

DEPARTMENT OF MATHEMATICS, APPLIED MATHEMATICS, AND STATISTICS, CASE WESTERN RESERVE UNIVERSITY, 10900 EUCLID AVE. CLEVELAND, OH 44106, U.S.A.

\end{document}